\titleformat*{\section}{\LARGE\bfseries}
\titleformat*{\subsection}{\Large\bfseries}
\titleformat*{\subsubsection}{\large\bfseries}
\newtheoremstyle{case}{}{}{}{}{}{:}{ }{}
\theoremstyle{case}
\newcommand{\be}{\begin{equation}}
\newcommand{\ee}{\end{equation}}
\newcommand{\ben}{\begin{eqnarray*}}
\newcommand{\een}{\end{eqnarray*}}
\newtheorem{examp}{\sc example}
\newtheorem{remk}{\sc remark}
\newtheorem{corol}{\sc corollary}
\newtheorem{lemma}{\sc lemma}
\newtheorem{theorem}{\sc theorem}
\newtheorem{defn}{\sc definition}
\newcommand{\bt}{\begin{theorem}}
\newcommand{\et}{\end{theorem}}
\newcommand{\bl}{\begin{lemma}}
\newcommand{\el}{\end{lemma}}
\newcommand{\bed}{\begin{defn}}
\newcommand{\eed}{\end{defn}}
\newcommand{\brem}{\begin{remk}}
\newcommand{\erem}{\end{remk}}
\newcommand{\bex}{\begin{examp}}
\newcommand{\eex}{\end{examp}}
\newcommand{\bcl}{\begin{corol}}
\newcommand{\ecl}{\end{corol}}
\newcommand{\NI}{\noindent}
\newtheorem{proposition}{Proposition}[section]
\theoremstyle{definition}
\theoremstyle{remark}
\numberwithin{equation}{section}
\numberwithin{theorem}{section}
\numberwithin{lemma}{section}
\newtheorem{definition}{Definition}[section]
\newtheorem{example}{Example}[section]
\newtheorem{corollary}{Corollary}[section]
\newtheorem{remark}{Remark}[section]
\begin{document}

\title{\large\bf\sc Column competent tensors and tensor complementarity problem }

\author{A. Dutta$^{a,1}$, R. Deb$^{a,2}$, A. K. Das$^{b,3}$\\
\emph{\small $^{a}$Jadavpur University, Kolkata , 700 032, India.}\\	
\emph{\small $^{b}$Indian Statistical Institute, 203 B. T.
	Road, Kolkata, 700 108, India.}\\
%\emph{\small $^{1}$Email: rwitamjanaju@gmail.com}\\
\emph{\small $^{1}$Email: aritradutta001@gmail.com}\\
\emph{\small $^{2}$Email: rony.knc.ju@gmail.com}\\
\emph{\small $^{3}$Email: akdas@isical.ac.in}\\
}

\date{}

\maketitle

\begin{abstract}
\noindent In multilinear algebra, some special classes of matrices are extended to higher order structured tensors. The local $w$-uniqueness solution to the linear complementarity problem can be identified by the column competent matrix. Motivated by this $w$-uniqueness property, we introduce column competent tensor in the context of tensor complementarity problem. We consider some important properties. In the theory of linear complementarity problem, column competent matrices are introduced to study local $w$-uniqueness property of LCP solution.
We present the inheritance property and invariance property of column competent tensors. We study the tensor complementarity problem using column competent tensors and several results are established. Some examples are illustrated to support the results.\\

\noindent{\bf Keywords:} Tensor complementarity problem, column competent tensor, nondegenerate tensor, $\omega$-solution.\\

\noindent{\bf AMS subject classifications:} 90C33, 90C30, 15A69, 46G25.
\end{abstract}
\footnotetext[2]{Corresponding author}

\section{Introduction}
In complementarity theory, locally $w$-uniqueness property explains the dynamical system under unilateral constraint. The class of column competent matrices was studied extensively in this context. Motivated by locally $w$-uniqueness property, we introduce column competent tensor in the context of tensor complementarity problem. The tensor complementarity problem is a class of nonlinear complementarity problem with the involved function being defined by a tensor, which is also direct and natural extension of the linear complementarity problem. In the last few years, the tensor complementarity problem has been studied extensively from theory to solution methods and applications. In recent years, various tensors with special structures have been studied. For details see \cite{qi2017tensor}, \cite{song2015properties}. As an application of structured tensors, a class of nonlinear complementarity problem denoted as NCP, which is called the tensor complementarity problem denoted as TCP in\cite{song2014properties}, studied initially by Song and Qi\cite{song2014properties}. Different from studies in the case of NCP, properties of various structured tensors and the corresponding polynomial forms play important roles in the studies of TCP.\\
For a given mapping $F: \mathbb{R}^n \mapsto \mathbb{R}^n$ the complementarity problem is to find a vector $x\in \mathbb{R}^n $ such that
\begin{equation}\label{Classical comp problem}
   x\geq 0, ~~F(x) \geq 0, ~~ \mbox{and}~~ x^{T}F(x)=0.
\end{equation}
If $F$ is nonlinear mapping, then the problem (\ref{Classical comp problem}) is called a nonlinear complementarity problem \cite{facchinei2007finite}, and if $F$ is linear function, then the problem (\ref{Classical comp problem}) reduces to a linear complementarity problem \cite{cottle2009linear}. The linear complementarity problem may be defined as follows:\\
Given a matrix $ M\in \mathbb{R}^{n\times n} $ and a vector $q \in \mathbb{R}^n$, the linear complementarity problem \cite{cottle2009linear}, denoted by LCP$(q,M)$, is to find a pair of vectors $w,z \in\mathbb{R}^n$ such that 
\begin{equation}\label{linear complementarity problem}
    z\geq 0, ~~~ w=Mz+q\geq 0, ~~~ z^T w=0.
\end{equation}
A pair of vectors $(w,z)$ satisfying (\ref{linear complementarity problem}) is called a solution of the LCP$(q,M)$.  A vector $z$ is called a $z$-solution if there exists a vector $w$ such that $(w,z)$ is a solution of the LCP$(q,M)$. Similarly vector $w$ is called a $w$-solution if there exists a vector $z$ such that $(w,z)$ is a solution of the LCP$(q,M)$. A $w$-solution, $\Tilde{w},$ of the LCP$(q, M)$ is said to be locally $w$-unique if there exists a neighborhood of $\Tilde{w}$ within which $\Tilde{w}$ is the only $w$-solution.
Xu \cite{xu1999local} introduced column competent matrices. Dutta et al. \cite{10} studied column competent matrices in context of linear complementarity problem. Xu \cite{xu1999local} showed that LCP$(q,A)$ has locally unique $w$-solution if and only if $A$ is column competent matrix. Several matrix classes and their subclasses have been studied extensively because of their predominance in scientific computing, complexity theory, and the theoretical foundations of the linear complementarity problems. For details see \cite{neogy2006some}, \cite{neogy2013weak}, \cite{neogy2005almost}, \cite{neogy2011singular}, \cite{jana2019hidden}, \cite{jana2021more}, \cite{neogy2009modeling}, \cite{das2017finiteness}. The problem of computing the value vector and optimal stationary strategies are formulated as a linear complementary problem for these two classes of undiscounted zero-sum games. This provides an alternative proof of the orderfield property for these two classes of games. For details see \cite{mondal2016discounted}, \cite{neogy2008mathematical}, \cite{neogy2008mixture}, \cite{neogy2005linear}, \cite{neogy2016optimization}, \cite{das2016generalized}. We can write down formulations of QMOP as LCP related weighted problem. For details see \cite{mohan2004note}. The applicability of Lemke’s algorithm and the concept of principal pivot transform extend the class of LCP problems solvable by Lemke’s algorithm. For details see \cite{mohan2001classes}, \cite{mohan2001more} \cite{neogy2005principal}, \cite{das2016properties}, \cite{neogy2012generalized}, \cite{jana2019hidden}, \cite{jana2021more}, \cite{jana2018processability}.

\noindent Now we consider the case of $F(x)=\mathcal{A}x^{m-1} +q $ with $\mathcal{A}\in T_{m,n}$, and $q \in \mathbb{R}^n$ then the problem (\ref{Classical comp problem}) becomes
\begin{equation}\label{ Tensor Complementarity problem}
    x\geq 0, ~~~\mathcal{A}x^{m-1} + q\geq 0, ~~~\mbox{and}~~ x^{T}(\mathcal{A}x^{m-1}+q)=0,
\end{equation}
which is called a tensor complementarity problem, denoted by  TCP$(q,\mathcal{A})$. Denote $\omega=\mathcal{A}x^{m-1} + q$, then the tensor complementarity problem is to find $x$ such that 
\begin{equation}\label{ Tensor Complementarity problem 2}
    x\geq 0, ~~~ \omega= \mathcal{A}x^{m-1} + q\geq 0, ~~~\mbox{and}~~ x^{T}\omega=0.
\end{equation}
A pair of vectors $(\omega, x)$ satisfying (\ref{ Tensor Complementarity problem 2}) is called a solution of the TCP$(q,\mathcal{A})$.  A vector $x$ is called a $x$-solution if there exists a vector $\omega$ such that $(\omega, x)$ is a solution of the TCP$(q,\mathcal{A})$. Similarly vector $\omega$ is called a $\omega$-solution if there exists a vector $x$ such that $(\omega, x)$ is a solution of the TCP$(q,\mathcal{A})$.

\noindent Various structured tensors are studied in context of tensor complementarity problem. Positive semidefinite symmetric tensors was introduced by Qi \cite{qi2005eigenvalues}. Song et al. \cite{song2015properties} studied  $P(P_0)$-tensors. Song et al. \cite{song2014properties} presented $R_0$-tensors. Luo et al. \cite{luo2017sparsest} studied $Z$-tensors. Finiteness of SOL$(q,\mathcal{A})$ was studied by Palpandi \cite{palpandi2021tensor}. %Dutta et al. \cite{dutta2022some} introduced column adequate tensor and studied $\omega$-uniqueness of solution of tensor complementarity problem.

The paper is organised as follows. Section 2 presents some basic notations and results. In section 3, we introduce the column competent tensors and study tensor theoretic properties. We establish necessary and sufficient condition for column competent tensor. We address the solution aspect of tensor complementarity problem in connection with column competent tensor as well as other related tensors. We establish the local uniqueness properties with the help of column competent tensor.

\section{Preliminaries}
We introduce some basic notations used in this paper. We consider tensor, matrices and vectors with real entries. For any positive integer $n,$  $[n]$ denotes set $\{ 1, 2,...,n \}$. Let $\mathbb{R}^n$ denote the $n$-dimensional Euclidean space and $\mathbb{R}^n_+ =\{ x\in \mathbb{R}^n : x\geq 0 \}$, $\mathbb{R}^n_{++} =\{ x\in \mathbb{R}^n : x> 0 \}$. %Let $\mathbb{Z}$ denote the set of integers and $\mathbb{Z}^n_+ =\{ x\in \mathbb{Z}^n : x\geq 0 \}$.
Any vector $x\in \mathbb{R}^n$ is a column vector and $x^T$ denotes the row transpose of $x.$ $|x|$ denotes the vector $(|x_1|, |x_2|, ..., |x_n|)^T$. $A^c$ denotes the complement of set $A.$
A diagonal matrix $D=[d_{ij}]_{n \times n}=diag(d_1, \; d_2,\; ..., \; d_n)$ is defined as $d_{ij}=\left \{ \begin{array}{ll}
	  d_i  &;\; \forall \; i=j, \\
	  0  &; \; \forall \; i \neq j.
	   \end{array}  \right.$
	   
\noindent Given a matrix $ M\in \mathbb{R}^{n\times n} $ and a vector $q \in \mathbb{R}^n$, we define the feasible set FEA$(q, M) = \{ z\in \mathbb{R}^n : z\geq 0,\; Mz+q\geq 0\}$ and the solution set of LCP$(q, M)$ by SOL$(q, M)= \{z \in\mbox{ FEA} (q, M) :
z^T(q + Mz) = 0\}.$

\noindent An $m$th order $n$ dimensional real tensor $\mathcal{A}= (a_{i_1 i_2 ... i_m}) $ is a multidimensional array of entries $a_{i_1 i_2 ... i_m} \in \mathbb{R}$ where $i_j \in [n]$ with $j\in [m]$. $T_{m,n}$ denotes the set of real tensors of order $m$ and dimension $n.$ Any $\mathcal{A}= (a_{i_1 i_2 ... i_m}) \in T_{m,n} $ is called a symmetric tensor, if the entries $a_{i_1 i_2 ... i_m}$ are invariant under any permutation of their indices. $S_{m,n}$ denotes the collection of all symmetric tensors of order $m$ and dimension $n$  where $m$ and $n$ are two given positive integers with $m,n\geq 2$. 	   
\noindent An identity tensor $\mathcal{I}_m=(\delta_{i_1 ... i_m})\in T_{m,n}$ is defined as 
$ \delta_{i_1 ... i_m}= \left\{
\begin{array}{ll}
	  1  &:\; i_1= ...= i_m, \\
	  0  &:\; \mbox{ else.}
	   \end{array}
 \right.$ 
 
\NI For $x \in \mathbb{R}^n$, let $x^{[m]}\in \mathbb{R}^n$ with its $i$th component being $x^m_i$ for all $i \in [n]$. For $\mathcal{A}\in T_{m,n} $ and $x\in \mathbb{R}^n,~ \mathcal{A}x^{m-1}\in \mathbb{R}^n $ is a vector defined by
\[ (\mathcal{A}x^{m-1})_i = \sum_{i_2, i_3, ...i_m =1}^{n} a_{i i_2 i_3 ...i_m} x_{i_2} x_{i_3} \cdot\cdot \cdot x_{i_m} , ~~~\mbox{for all}~i \in [n], \]
and $\mathcal{A}x^m\in \mathbb{R} $ is a scalar defined by
\[ x^T \mathcal{A}x^{m-1} = \mathcal{A}x^m = \sum_{i_1,i_2, i_3, ...i_m =1}^{n} a_{i_1 i_2 i_3 ...i_m} x_{i_1} x_{i_2} \cdot\cdot \cdot x_{i_m} .\]

\noindent The general product of tensors was introduced by Shao \cite{shao2013general}. Let $\mathcal{A}$ and $\mathcal{B}$ be order $m \geq 2$ and order $k \geq 1$, $n$-dimensional tensor respectively. The product $\mathcal{A} \cdot \mathcal{B}$ is a tensor $\mathcal{C}$ of order $((m-1)(k-1)) + 1$ and $n$-dimensional with entries 
\[c_{i \alpha_1 \cdots \alpha_{m-1} } =\sum_{i_2, \cdots ,i_m \in[n]} a_{i i_2 \cdots i_m} b_{i_2 \alpha_1} \cdots b_{i_m \alpha_{m-1}},\] where $i \in [n]$, $\alpha_1, \cdots, \alpha_{m-1} \in [n]^{k-1}.$ %This product was proved to be associative by Shao (\cite{shao2013general}).

\begin{definition}\cite{song2015properties}
Let $\mathcal{A}= (a_{i_1 i_2 i_3 ...i_m}) \in T_{m,n}$ and $J\subseteq [n]$ with $|J|=r, 1\leq r \leq n$. Then a principal subtensor of $\mathcal{A}$ is denoted by $\mathcal{A}^J_r$ and is defined as
\[ \mathcal{A}^J_r = ( a_{i_1 i_2 ...i_m} ), \; \forall \; i_1, i_2,...i_m \in J .\]
\end{definition}

Let $\mathcal{A} \in T_{m,n} $. Now we define $\psi_\mathcal{A} (x) :\mathbb{R}^n \longmapsto \mathbb{R}^n $ by $\psi_\mathcal{A} (x) = x * (\mathcal{A}x^{m-1}) $, where $*$ is the Hadamard product defined by $(u*v)_i = (u_i . v_i).$ Note that this product is associative, ditributive, commutative.

We define $kernel$ of $\mathcal{A}$ as $ker\; \mathcal{A} = \{ x \in \mathbb{R}^n : \mathcal{A}x^{m-1}=0 \} $ and $ker ~\psi_\mathcal{A}= \{  x \in \mathbb{R}^n : \psi_\mathcal{A} (x) =0 \} $. Let $\mathcal{A}\in T_{3,2}$, such that $a_{111}=1,\; a_{122}=-1, \; a_{211}=-1, a_{222}=1$, and $a_{ijk}=0$, for all other $i,j,k \in [2]$. Then $\mathcal{A}x^2= \left( \begin{array}{c}
     x_1^2 -x_2^2\\
     -x_1^2 + x_2^2
\end{array} \right)$. %= \left( \begin{array}{c}
 %    0\\
 %    0
%\end{array} \right) $
Then $ker\; \mathcal{A}= \{ (x_1,x_2) \in \mathbb{R}^2 : \; |x_1| = |x_2| \}.$

\begin{definition}\cite{song2016properties}
Given $\mathcal{A}= (a_{i_1 i_2 ... i_m}) \in T_{m,n} $ and $q\in \mathbb{R}^n$, a vector $x\in \mathbb{R}^n$ is said to be (strictly) feasible solution of TCP$(q,\mathcal{A})$ defined by (\ref{ Tensor Complementarity problem}) if $x(>) \geq 0$ and $\mathcal{A}x^{m-1}+q (>) \geq 0$. TCP$(q,\mathcal{A})$ defined by (\ref{ Tensor Complementarity problem}) is said to be (strictly) feasible if a (strictly) feasible vector exists. The set of feasible solutions of TCP$(q,\mathcal{A})$ is denoted by FEA$(q,\mathcal{A}) = \{ x\in \mathbb{R}^n : x\geq 0,\; \mathcal{A}x^{m-1} +q \geq 0\}.$ 
\end{definition}

\begin{definition}\cite{song2016properties}
Given $\mathcal{A}= (a_{i_1 i_2 ... i_m}) \in T_{m,n} $ and $q\in \mathbb{R}^n$, TCP$(q,\mathcal{A})$ defined by (\ref{ Tensor Complementarity problem}) is said to be solvable if there is a feasible vector $x\in \mathbb{R}^n$ satisfying $x^{T}(\mathcal{A}x^{m-1}+q)=0$ and $x$ is a solution of the TCP$(q,\mathcal{A})$. The solution set of TCP$(q,\mathcal{A})$ is denoted by SOL$(q,\mathcal{A}) = \{x \in \mbox{FEA}(q,\mathcal{A}) : x^{T}(\mathcal{A}x^{m-1}+q)=0
\}.$
\end{definition}

Some matrix classes that play important role in the study of linear complementarity problem are extended to tensor classes. We define some structured tensor classes.

\begin{definition}
\cite{qi2005eigenvalues} A tensor $\mathcal{A}= (a_{i_1 i_2 ... i_m}) \in T_{m,n} $ is said to be a positive definite (positive semidefinite) tensor, if $\mathcal{A}x^m > (\geq)0 $ for all $x\in \mathbb{R}^n \backslash \{0\}$. Furthermore if $\mathcal{A}\in S_{m,n}$ and $\mathcal{A}x^m > (\geq)0 $ for all $x\in \mathbb{R}^n \backslash \{0\}$ then $\mathcal{A}$ is said to be a symmetric positive definite (positive semidefinite) tensor.
\end{definition}

\begin{definition}
\cite{song2015properties} A tensor $\mathcal{A}= (a_{i_1 i_2 ... i_m}) \in T_{m,n} $ is said to be a $P(P_0)$-tensor, if for each $x\in \mathbb{R}^n \backslash \{0\}$, there exists an index $i\in [n]$ such that $x_i \neq 0$ and $x_i (\mathcal{A}x^{m-1})_i > (\geq 0)$.
\end{definition}

\begin{definition}\cite{zhang2014m}
A tensor $\mathcal{A}= (a_{i_1 i_2 ... i_m}) \in T_{m,n} $ is said to be $Z$-tensor if all its off-diagonal entries are nonpositive, i.e. $a_{i_1...i_m} \leq 0$ when $\delta_{i_1...i_m} =0 .$
\end{definition}

\begin{definition}\cite{song2014properties}
A tensor $\mathcal{A}= (a_{i_1 i_2 ... i_m}) \in T_{m,n} $ is said to be $R$-tensor if there exists no $(x, t)\in (\mathbb{R}^n_+ \backslash \{ 0\}) \times \mathbb{R}_+ $such that
\begin{equation}\label{definition of R}
\begin{split}
(\mathcal{A}x^{m-1})_i + t =0 & \mbox{ if } x_i > 0, \\ 
(\mathcal{A}x^{m-1})_i + t \geq 0 & \mbox{ if } x_i = 0.
\end{split}
\end{equation}

\noindent A tensor $\mathcal{A}= (a_{i_1 i_2 ... i_m}) \in T_{m,n} $ is said to be an $R_0$-tensor, if the system (\ref{definition of R}) has no nonzero solution when $t = 0,$
i.e. there exists no $x\in \mathbb{R}^n_+ \backslash \{0\}$ such that

\begin{equation}\label{definition of R0}
\begin{split}
(\mathcal{A}x^{m-1})_i =0 & \mbox{ if } x_i > 0, \\ 
(\mathcal{A}x^{m-1})_i \geq 0 & \mbox{ if } x_i = 0.
\end{split}
\end{equation}
\end{definition}

\begin{definition}\cite{dutta2022some}
A tensor $\mathcal{A} \in T_{m,n} $ is said to be column adequate tensor if for $x\in \mathbb{R}^n,$ $x_i (\mathcal{A}x^{m-1})_i \leq 0 , ~\forall \; i \in [n] $ implies $\mathcal{A}x^{m-1} =0$.
\end{definition}

\begin{definition}\cite{shao2016some}
Given $\mathcal{A} \in T_{m,n}$ and $R_i(\mathcal{A})= (r_{i i_2 ...i_m })_{i_2 ...i_m}^n \in T_{m-1,n}$ such that $r_{i i_2 ... i_m} = a_{i i_2 ...i_m} $, then $\mathcal{A}$ is called row subtensor diagonal, or simply row diagonal, if all its row subtensor $R_i(\mathcal{A}), \; i=1,2,...,n$ are diagonal tensors, namely, if $a_{i i_2 ...i_m}$ can take nonzero values only when $i_2 = \cdot \cdot \cdot = i_m$.
\end{definition}

\begin{definition}\cite{pearson2010essentially}
Given $\mathcal{A} \in T_{m,n}$, the majorization matrix $M(\mathcal{A})$ of $\mathcal{A}$ is the $n\times n$ matrix with the entries $M(\mathcal{A})_{i j} = a_{ijj...j} \mbox{ where }i,j=1,2,...,n $.
\end{definition}

\begin{definition}\cite{palpandi2021tensor}
A tensor $\mathcal{A}= (a_{i_1 i_2 ... i_m}) \in T_{m,n} $ is said to be a nondegenerate tensor, if for $x\in \mathbb{R}^n,$ $x_i(\mathcal{A}x^{m-1})_i = 0, \; \forall \; i\in [n]$ implies $x=0.$
\end{definition}

\begin{theorem}\cite{shao2016some}
Let $\mathcal{A}$ be an order $m$ and dimension $n$ tensor. Then $\mathcal{A}$ is row diagonal if and only if $\mathcal{A}=M(\mathcal{A})\mathcal{I}_m$ where $\mathcal{I}_m$ is the identity tensor of order $m$ and dimension $n$.
\end{theorem}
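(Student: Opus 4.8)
The plan is to expand both sides entrywise using Shao's general tensor product and compare. First I would record that, since $M(\mathcal{A})$ has order $2$ and $\mathcal{I}_m$ has order $m$, the product $M(\mathcal{A})\cdot\mathcal{I}_m$ has order $(2-1)(m-1)+1=m$ and dimension $n$, so it lies in $T_{m,n}$ and the claimed identity $\mathcal{A}=M(\mathcal{A})\mathcal{I}_m$ is at least meaningful. Writing the product with a single trailing multi-index $\alpha_1=(j_2,\dots,j_m)\in[n]^{m-1}$, the definition gives
\[
(M(\mathcal{A})\mathcal{I}_m)_{i j_2 \cdots j_m}=\sum_{i_2\in[n]} M(\mathcal{A})_{i i_2}\,(\mathcal{I}_m)_{i_2 j_2 \cdots j_m}.
\]

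Next I would invoke the definition of the identity tensor: $(\mathcal{I}_m)_{i_2 j_2 \cdots j_m}$ is nonzero only when $i_2=j_2=\cdots=j_m$, in which case it equals $1$. Consequently the sum above equals $M(\mathcal{A})_{ij}$ when $j_2=\cdots=j_m=j$ for some $j\in[n]$, and equals $0$ otherwise. Recalling that $M(\mathcal{A})_{ij}=a_{ijj\cdots j}$, this shows that $M(\mathcal{A})\mathcal{I}_m$ is automatically row diagonal and that on each ``diagonal slice'' $(i,j,j,\dots,j)$ its entry is precisely $a_{ijj\cdots j}$, i.e.\ the corresponding entry of $\mathcal{A}$.

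Finally I would finish by an entrywise comparison in both directions. If $\mathcal{A}$ is row diagonal, then $a_{i i_2 \cdots i_m}=0$ whenever $i_2,\dots,i_m$ are not all equal, while on the diagonal slices the entries of $\mathcal{A}$ and of $M(\mathcal{A})\mathcal{I}_m$ coincide by the computation above; hence $\mathcal{A}=M(\mathcal{A})\mathcal{I}_m$. Conversely, if $\mathcal{A}=M(\mathcal{A})\mathcal{I}_m$, then since the right-hand side vanishes whenever $i_2,\dots,i_m$ are not all equal, so does $\mathcal{A}$, which is exactly the definition of row diagonality. I expect the only delicate point to be the bookkeeping in the general product --- correctly identifying which slots of $\mathcal{I}_m$ are contracted against $M(\mathcal{A})$ and which survive as the free indices $j_2,\dots,j_m$, and confirming the resulting order is $m$; once that is set up, the rest is a routine entrywise identification with no genuine obstacle.
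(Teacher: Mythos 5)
Your argument is correct: the order count $(2-1)(m-1)+1=m$ is right, the contraction of $M(\mathcal{A})$ against the first slot of $\mathcal{I}_m$ collapses the sum to the single term $M(\mathcal{A})_{ij}=a_{ijj\cdots j}$ exactly when $j_2=\cdots=j_m=j$, and the two directions of the entrywise comparison follow immediately. The paper states this result as a citation to Shao et al.\ without reproducing a proof, and your direct expansion of the general tensor product is the standard verification, so there is nothing further to reconcile.
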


\begin{definition}\cite{xu1999local}
The matrix $A$ is said to be column competent matrix if $z_i (Az)_i = 0, \; i = 1,2,...,n \implies Az=0$.
\end{definition}

\begin{theorem}\label{matrix finite w result}
\cite{xu1999local} Let $A\in \mathbb{R}^{n \times n}$. Then the following conditions are equivalent:\\
(a) $A$ is column competent matrix.\\
(b) For all vector $q$, the LCP$(q, A)$ has a finite number (possibly zero) of $w$-solutions.\\
(c) For all vector $q$, any $w$-solution of the LCP$(q, A)$, if it exists, must be locally $w$-unique.
\end{theorem}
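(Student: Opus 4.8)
The plan is to prove the equivalence via the cycle $(a)\Rightarrow(b)\Rightarrow(c)\Rightarrow(a)$. The implication $(b)\Rightarrow(c)$ needs no real work: for a fixed $q$, if LCP$(q,A)$ has only finitely many $w$-solutions then each of them is an isolated point of that finite set, which is exactly the statement that it is locally $w$-unique. So the content lies in $(a)\Rightarrow(b)$ and in $(c)\Rightarrow(a)$.

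For $(a)\Rightarrow(b)$ I would first record the standard decomposition of the solution set by complementary supports. Any solution $(w,z)$ has $z,w\ge 0$ and $z^Tw=0$, hence $z_iw_i=0$ for every $i$; putting $\bar\alpha=[n]\setminus\alpha$, this gives SOL$(q,A)=\bigcup_{\alpha\subseteq[n]}S_\alpha(q)$ with $S_\alpha(q)=\{z:\ z_{\bar\alpha}=0,\ z_\alpha\ge 0,\ (Az+q)_\alpha=0,\ (Az+q)_{\bar\alpha}\ge 0\}$, and on each $S_\alpha(q)$ the complementarity condition is automatic. The crucial observation is that column competence forces the affine map $z\mapsto Az+q$ to be constant on each piece: if $z,z'\in S_\alpha(q)$ and $u=z-z'$, then $u_{\bar\alpha}=0$ and $(Au)_\alpha=(Az)_\alpha-(Az')_\alpha=0$, so for every index $i$ the product $u_i(Au)_i$ vanishes (the factor $u_i$ is zero on $\bar\alpha$, the factor $(Au)_i$ is zero on $\alpha$); by column competence $Au=0$, i.e. $Az+q=Az'+q$. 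Hence each nonempty $S_\alpha(q)$ gives exactly one $w$-solution, and LCP$(q,A)$ has at most $2^n$ of them.

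For $(c)\Rightarrow(a)$ I would argue by contraposition. If $A$ is not column competent, pick $z^*$ with $z^*_i(Az^*)_i=0$ for all $i$ but $Az^*\ne 0$, and choose $\alpha$ with $\{i:z^*_i\ne 0\}\subseteq\alpha\subseteq\{i:(Az^*)_i=0\}$; this is possible precisely because $z^*_i(Az^*)_i=0$, and it yields $z^*_{\bar\alpha}=0$, $(Az^*)_\alpha=0$, while $(Az^*)_{\bar\alpha}\ne 0$ (and both $\alpha$ and $\bar\alpha$ are nonempty, since $z^*\ne 0$ and $Az^*\ne 0$). Now fix any $\bar z$ with $\bar z_{\bar\alpha}=0$ and $\bar z_\alpha>0$ strictly, and define $q$ by $q_\alpha=-(A\bar z)_\alpha$ and $q_{\bar\alpha}=-(A\bar z)_{\bar\alpha}+e$ for a sufficiently large strictly positive vector $e$. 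A direct verification shows that for all small $t\ge 0$ the point $\bar z+tz^*$ lies in $S_\alpha(q)$: the $\alpha$-equality holds because $(Az^*)_\alpha=0$ and $q_\alpha=-(A\bar z)_\alpha$, positivity of $(\bar z+tz^*)_\alpha$ holds for small $t$ by strict positivity of $\bar z_\alpha$, the $\bar\alpha$-inequality reduces to $t(Az^*)_{\bar\alpha}+e\ge 0$ which holds for $e$ large (or $t$ small), and complementarity is automatic. The associated $w$-solutions $w(t)=A\bar z+q+t\,Az^*$ are pairwise distinct because $Az^*\ne 0$, so they trace a nondegenerate segment and none of them is locally $w$-unique; thus $(c)$ fails. (The same construction also shows $\neg(a)\Rightarrow\neg(b)$ outright, since it exhibits infinitely many $w$-solutions.)

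The step I expect to be the main obstacle is the construction in $(c)\Rightarrow(a)$: one must extract the right support set $\alpha$ from the failure of column competence, keep the perturbed point $\bar z+tz^*$ feasible over an entire interval of $t$ (managed by taking $\bar z_\alpha$ strictly positive and the slack $e$ large), and confirm that different values of $t$ genuinely produce different $w$-solutions — which is exactly the content of $Az^*\ne 0$. Conceptually, the heart of the theorem is the equivalence ``$A$ is column competent'' if and only if ``$z\mapsto Az+q$ is constant on every complementary face $S_\alpha(q)$ of the solution set,'' and once that equivalence is isolated the remaining arguments are routine bookkeeping.
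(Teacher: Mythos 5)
The paper does not prove this statement at all: it is quoted verbatim as a known result of Xu \cite{xu1999local}, so there is no internal proof to compare yours against. That said, your argument is correct and essentially reconstructs the standard proof. The two load-bearing steps both check out. For (a)\(\Rightarrow\)(b), the observation that column competence forces \(z\mapsto Az+q\) to be constant on each complementary piece \(S_\alpha(q)\) is exactly right: for \(u=z-z'\) the product \(u_i(Au)_i\) vanishes index by index (one factor dies on \(\alpha\), the other on \(\bar\alpha\)), so \(Au=0\) and there are at most \(2^n\) distinct \(w\)-values. For (c)\(\Rightarrow\)(a), the support selection \(\{i:z^*_i\neq 0\}\subseteq\alpha\subseteq\{i:(Az^*)_i=0\}\) is legitimate precisely because \(z^*_i(Az^*)_i=0\) forces the support of \(z^*\) into the zero set of \(Az^*\), and both \(\alpha\) and \(\bar\alpha\) are nonempty since \(z^*\neq 0\) and \(Az^*\neq 0\); the feasibility of \(\bar z+tz^*\) over a whole interval follows from \(\bar z_\alpha>0\) and \(z^*_{\bar\alpha}=0\), and the resulting \(w(t)=A\bar z+q+tAz^*\) form a nondegenerate segment because \(Az^*\neq 0\), killing both (b) and (c) at once. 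The only cosmetic remark: the phrase ``\(e\) large'' is unnecessary, since any \(e>0\) works once \(t\) is confined to a small enough interval, but this does not affect correctness.
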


\section{Main results}

We begin by the definition of column competent tensor.

\begin{definition}
 A tensor $\mathcal{A} \in T_{m,n}$ is said to be a column competent tensor if for $x\in \mathbb{R}^n,$ $x_i (\mathcal{A}x^{m-1})_i = 0$, $\forall \; i \in [n] $ implies $\mathcal{A}x^{m-1} = 0$.
\end{definition}

\begin{example}\label{first example}
Consider $\mathcal{A} \in T_{3,2}$ such that  $a_{111}=0, ~a_{112}=1, ~a_{121}=1, ~a_{122}=1,~ a_{211}=0, ~a_{212}=1, ~a_{221}=1, ~a_{222}=1.$ Then for $x=\left(
	\begin{array}{c}
	 x_1 \\
	 x_2
	 	\end{array}
\right) \in \mathbb{R}^2$ we have $ \mathcal{A}x^{2}=
\left(
	\begin{array}{l}
	  2x_1 x_2 +x_2^2 \\
	  2x_1 x_2 +x_2^2 
	 	\end{array}
\right) .$ Now $x_i (\mathcal{A}x^2)_i = 0, ~\forall \; i \in \{1,2\} $ implies $\mathcal{A}x^{2}=0$. 
Therefore $\mathcal{A}$ is a column competent tensor.
%The solution set of the system of equation $x_1  (2x_1 x_2 +x_2^2 ) =0, ~~x_2 ( 2x_1 x_2 +x_2^2 )=0 $ is $S= \{(k,0)^T : k\in \mathbb{R}^n\} \cup \{ k(1, -2)^T : k\in \mathbb{R}^n\}$. For any $x\in S$, we have $\mathcal{A}x^{2}=0$. Thus $x_i (\mathcal{A}x^{3-1})_i = 0, ~\forall \; i \in \{1,2\} $ implies $\mathcal{A}x^{2}=0$.
% $\mathcal{A}$ is not $P(P_0)$ (Take $x=(-1, -1)^T$)
\end{example}

In the following result we prove the inheritance property of column competent tensors in context of principal subtensors.

\begin{theorem}
Suppose that  $\mathcal{A} \in T_{m,n} $ is a column competent tensor. Then all principal subtensors of $\mathcal{A}$ are column competent tensors.
\end{theorem}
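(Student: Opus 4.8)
The plan is to use the standard zero-extension argument that relates a principal subtensor acting on a coordinate subspace to the ambient tensor. Fix $J \subseteq [n]$ with $|J| = r$ and consider the principal subtensor $\mathcal{A}^J_r \in T_{m,r}$, whose coordinates we index by the elements of $J$. To prove it is column competent, I would take an arbitrary $y \in \mathbb{R}^r$ satisfying $y_i (\mathcal{A}^J_r y^{m-1})_i = 0$ for every $i \in J$, and aim to conclude $\mathcal{A}^J_r y^{m-1} = 0$.

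The key step is to lift $y$ to a vector $x \in \mathbb{R}^n$ by setting $x_i = y_i$ for $i \in J$ and $x_i = 0$ for $i \notin J$. The crucial observation, immediate from the defining formula $(\mathcal{A}x^{m-1})_i = \sum_{i_2,\dots,i_m = 1}^n a_{i i_2 \cdots i_m} x_{i_2} \cdots x_{i_m}$, is that every monomial containing an index outside $J$ vanishes; hence for each $i \in J$ we have $(\mathcal{A}x^{m-1})_i = (\mathcal{A}^J_r y^{m-1})_i$. Consequently $x_i (\mathcal{A}x^{m-1})_i = y_i (\mathcal{A}^J_r y^{m-1})_i = 0$ for $i \in J$, while $x_i (\mathcal{A}x^{m-1})_i = 0$ trivially for $i \notin J$ since $x_i = 0$. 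Thus $x$ satisfies the hypothesis in the definition of column competence for $\mathcal{A}$.

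Applying the column competence of $\mathcal{A}$ yields $\mathcal{A}x^{m-1} = 0$; restricting to the coordinates in $J$ and invoking the identity $(\mathcal{A}x^{m-1})_i = (\mathcal{A}^J_r y^{m-1})_i$ gives $\mathcal{A}^J_r y^{m-1} = 0$, which is exactly what is required. Since $J$ was an arbitrary subset of $[n]$, every principal subtensor of $\mathcal{A}$ is a column competent tensor.

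I do not expect a genuine obstacle here: the only point demanding care is the index bookkeeping in identifying $\mathbb{R}^J$ with $\mathbb{R}^r$ and verifying that the zero-padding annihilates precisely the cross terms so that the restricted polynomial map $x \mapsto \mathcal{A}^J_r x^{m-1}$ is recovered on the support $J$. The same template, incidentally, shows that the nondegenerate property and the column-adequate property are likewise inherited by principal subtensors.
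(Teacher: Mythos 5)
Your proposal is correct and follows exactly the paper's own argument: zero-extend the vector from $\mathbb{R}^J$ to $\mathbb{R}^n$, observe that the components over $J$ of $\mathcal{A}$ applied to the extension coincide with those of the principal subtensor, and invoke column competence of $\mathcal{A}$. The only difference is cosmetic (the paper names the small vector $x$ and the extension $y$, the reverse of your convention).
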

\begin{proof}
Let $J \subseteq [n]$, $|J|= r$ and $\mathcal{A}_r^J$ be a principal subtensor of  $\mathcal{A}$. For some $x\in \mathbb{R}^r$, if $x_i(\mathcal{A}_r^J x^{m-1})_i = 0,~\forall \; i\in J $, we write $y\in \mathbb{R}^n $ such that $y_i= \left\{
\begin{array}{ll}
	  x_i  &;\; \forall \; i\in J \\
	  0  &; \; \forall \; i\in J^c
	   \end{array}
 \right.$.
Then we have $(\mathcal{A}y^{m-1})_i=(\mathcal{A}_r^J x^{m-1})_i, \; \forall \; i\in J$ which implies  \[y_i(\mathcal{A}y^{m-1})_i = \left\{
\begin{array}{cc}
	  x_i(\mathcal{A}_r^J x^{m-1})_i = 0 & ; \; \forall \; i\in J, \\
	  0   & ; \; \forall \; i\in J^c.
	   \end{array}
 \right.\]
Thus $y_i(\mathcal{A}y^{m-1})_i = 0, \; \forall \; i\in[n] $. Since $\mathcal{A}$ is column competent tensor, $y_i(\mathcal{A}y^{m-1})_i = 0, \; \forall \; i\in[n] \implies (\mathcal{A}y^{m-1})_i=0 ~, \forall \; i \in [n] $ $\implies (\mathcal{A}_r^J x^{m-1})_i=0, \; \forall \; i \in J $.
Hence $\mathcal{A}_r^J$ is a column competent tensor.
\end{proof}

\begin{theorem}
Let $\mathcal{A} \in T_{m,n}$ and $P=diag(p_1, p_2, ..., p_n)$, $Q=diag(q_1, q_2, ..., q_n) $ be two diagonal matrices of order $n$ where $p_i, q_i \neq 0, ~\forall \; i \in [n]$. Then $ \mathcal{A}$ is column competent tensor if and only if $ P\mathcal{A}Q $ is column competent tensor.
\end{theorem}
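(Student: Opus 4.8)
The plan is to reduce the whole equivalence to a single algebraic identity relating the polynomial map of $P\mathcal{A}Q$ to that of $\mathcal{A}$. Writing $\mathcal{B}:=P\mathcal{A}Q$, I would first use Shao's general product together with the fact that $P$ and $Q$ are diagonal to compute the entries
\[
b_{i i_2\cdots i_m} \;=\; p_i\, a_{i i_2\cdots i_m}\, q_{i_2}\cdots q_{i_m},
\]
and then substitute this into the definition of $\mathcal{B}x^{m-1}$. Factoring $p_i$ out of the outer sum and absorbing each $q_{i_j}$ into $x_{i_j}$ yields, for every $x\in\mathbb{R}^n$ and every $i\in[n]$,
\[
\big((P\mathcal{A}Q)x^{m-1}\big)_i \;=\; p_i\,\big(\mathcal{A}(Qx)^{m-1}\big)_i .
\]
This identity is the workhorse of the argument, and checking it carefully from the (non-associative) general product — in particular that the bracketings $(P\mathcal{A})Q$ and $P(\mathcal{A}Q)$ agree here precisely because $P,Q$ are diagonal — is the step I expect to take the most care with.

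Next I would introduce the change of variable $y=Qx$, equivalently $x=Q^{-1}y$, which is a bijection of $\mathbb{R}^n$ since each $q_i\neq 0$. Using $x_i=y_i/q_i$ together with the identity above gives
\[
x_i\big((P\mathcal{A}Q)x^{m-1}\big)_i \;=\; \frac{p_i}{q_i}\; y_i\,\big(\mathcal{A}y^{m-1}\big)_i ,
\]
and since $p_i/q_i\neq 0$ for all $i$, the condition ``$x_i(\mathcal{B}x^{m-1})_i=0$ for all $i$'' holds exactly when ``$y_i(\mathcal{A}y^{m-1})_i=0$ for all $i$''. Similarly $\mathcal{B}x^{m-1}=0$ if and only if $\mathcal{A}y^{m-1}=0$, because $(\mathcal{B}x^{m-1})_i=p_i(\mathcal{A}y^{m-1})_i$ and $p_i\neq 0$.

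With these equivalences in hand the forward direction is immediate: if $\mathcal{A}$ is column competent and $x$ satisfies $x_i(\mathcal{B}x^{m-1})_i=0$ for all $i$, set $y=Qx$; then $y_i(\mathcal{A}y^{m-1})_i=0$ for all $i$, so $\mathcal{A}y^{m-1}=0$, hence $\mathcal{B}x^{m-1}=0$, proving $\mathcal{B}$ is column competent. For the converse I would invoke symmetry: $\mathcal{A}=P^{-1}(P\mathcal{A}Q)Q^{-1}$ with $P^{-1}$ and $Q^{-1}$ again diagonal matrices having nonzero diagonal entries, so column competence of $P\mathcal{A}Q$ forces column competence of $\mathcal{A}$ by the direction already established (alternatively one simply runs the change-of-variable argument in reverse). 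The only genuine obstacle is the entrywise bookkeeping in the first paragraph; once the identity $\big((P\mathcal{A}Q)x^{m-1}\big)_i = p_i\big(\mathcal{A}(Qx)^{m-1}\big)_i$ is in place, the remainder is routine.
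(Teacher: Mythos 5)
Your proposal is correct and follows essentially the same route as the paper: compute the entries $b_{i i_2\cdots i_m}=p_i a_{i i_2\cdots i_m}q_{i_2}\cdots q_{i_m}$, substitute $y=Qx$ to obtain $x_i(\mathcal{B}x^{m-1})_i=\frac{p_i}{q_i}y_i(\mathcal{A}y^{m-1})_i$ and $(\mathcal{B}x^{m-1})_i=p_i(\mathcal{A}y^{m-1})_i$, and deduce the converse by applying the forward direction to $P^{-1}(P\mathcal{A}Q)Q^{-1}=\mathcal{A}$. No substantive differences to report.
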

\begin{proof}
Let $ P= diag(p_1, p_2, ..., p_n)$ and $Q= diag(q_1, q_2, ..., q_n)$ be two diagonal matrices of order $n$ where $p_i, q_i \neq0, ~\forall \; i \in [n]$.
Let $\mathcal{A} = (a_{i_1 i_2 ... i_m})\in T_{m,n}$ be column competent tensor of order $m$ and dimension $n$. Let $\mathcal{B}= P \mathcal{A} Q$, assume $\mathcal{B} = (b_{i_1 i_2 ... i_m})$, then by the definition of multiplication for all $i_1, i_2, ...,i_m \in [n]$, we have,
\begin{align*}
     b_{i_1 i_2 ...i_m} =& \sum_{j_1, j_2, ...,j_m \in [n] } p_{i_1 j_1} a_{j_1 j_2 ...j_m} q_{j_2 i_2} q_{j_3 i_3} \cdot \cdot \cdot q_{j_m i_m}\\
     =& p_{i_1} a_{i_1 i_2 ...i_m} q_{ i_2} q_{ i_3} \cdot \cdot \cdot q_{ i_m}.
\end{align*}
So, for $x\in \mathbb{R}^n$ and $i\in [n], $ we obtain, 
\begin{align*}
   x_i(\mathcal{B}x^{m-1})_i  & = x_i \sum_{ i_2, ...,i_m \in [n] } b_{i i_2 ...i_m} x_{ i_2} x_{ i_3} \cdot \cdot \cdot x_{ i_m} \\
      & = x_i \sum_{ i_2, ...,i_m \in [n] } p_i a_{i i_2 ...i_m} q_{i_2} q_{i_3} \cdot \cdot \cdot q_{i_m}   x_{ i_2} x_{ i_3} \cdot \cdot \cdot x_{ i_m} \\
       & = \frac{p_i}{q_i} (q_i x_i) \sum_{ i_2, ...,i_m \in [n] } a_{i i_2 ...i_m} (q_{i_2} x_{ i_2}) (q_{i_3}  x_{ i_3}) \cdot \cdot \cdot  (q_{i_m}x_{ i_m}) \mbox{,   since } q_i \neq 0, \; \forall \; i \in [n]\\
       & = \frac{p_i}{q_i} y_i \sum_{ i_2, ...,i_m \in [n] } a_{i i_2 ...i_m} y_{ i_2} y_{ i_3} \cdot \cdot \cdot y_{ i_m} \\
       & = \frac{p_i}{q_i} y_i(\mathcal{A}y^{m-1})_i,
\end{align*}
where $y=Q x$. Therefore, $x_i(\mathcal{B}x^{m-1})_i = 0 \iff y_i(\mathcal{A}y^{m-1})_i = 0, \forall \; i \in [n]$. Since $\mathcal{A} $ is column competent tensor, $y_i(\mathcal{A}y^{m-1})_i = 0, \;\forall \; i \in [n]   \implies \mathcal{A}y^{m-1}= 0 $. Again $(\mathcal{B}x^{m-1})_i = p_i(\mathcal{A}y^{m-1})_i,~ \forall \; i\in [n] $. This implies $\mathcal{B}x^{m-1}=0 $. Thus $x_i(\mathcal{B}x^{m-1})_i = 0, ~\forall \; i \in [n] \implies \mathcal{B}x^{m-1}=0 $. Hence, $\mathcal{B} = P \mathcal{A} Q$ is column competent tensor.

Conversely, let $P \mathcal{A} Q$ be column competent tensor. Since the entries of $P$ and $Q$ are such that $p_i, q_i\neq 0, ~\forall \; i\in [n]$, $P$ and $Q$ are invertible with $P^{-1}= diag (\frac{1}{p_1}, \cdots , \frac{1}{p_n})$ and $Q^{-1}= diag (\frac{1}{q_1}, \cdots , \frac{1}{q_n})$, where $ \frac{1}{p_i} , \frac{1}{p_i} \neq 0, ~\forall \; i\in [n]$. Therefore by the first part of the proof, the tensor $P^{-1} (P \mathcal{A} Q) Q^{-1} = \mathcal{A} $ %(by associativity of tensor multiplication)
is column competent tensor.
\end{proof}

\begin{corollary}
If $\mathcal{A} \in T_{m,n}$ be a column competent tensor and $D$ be a diagonal matrices of order $n$. Then $ D\mathcal{A}D $ is column competent tensor.
\end{corollary}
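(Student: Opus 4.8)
The plan is to obtain this as an immediate consequence of the preceding theorem, with a small amount of extra care so that the statement holds for every diagonal matrix $D$, not only the nonsingular ones. First I would note that if $D = \mathrm{diag}(d_1, d_2, \dots, d_n)$ has $d_i \neq 0$ for all $i \in [n]$, then taking $P = Q = D$ in the preceding theorem shows at once that $P\mathcal{A}Q = D\mathcal{A}D$ is column competent whenever $\mathcal{A}$ is.

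For an arbitrary diagonal $D$ (allowing some $d_i = 0$, a case not covered by that theorem) I would argue directly. Setting $\mathcal{B} = D\mathcal{A}D = (b_{i_1 i_2 \dots i_m})$, the general product formula gives $b_{i i_2 \dots i_m} = d_i\, a_{i i_2 \dots i_m}\, d_{i_2} \cdots d_{i_m}$ for all indices, so for any $x \in \mathbb{R}^n$, writing $y := Dx$, one computes $(\mathcal{B}x^{m-1})_i = d_i (\mathcal{A}y^{m-1})_i$, and hence $x_i (\mathcal{B}x^{m-1})_i = (d_i x_i)(\mathcal{A}y^{m-1})_i = y_i (\mathcal{A}y^{m-1})_i$ for every $i \in [n]$.

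With this identity in hand the conclusion follows: if $x_i(\mathcal{B}x^{m-1})_i = 0$ for all $i \in [n]$, then $y_i(\mathcal{A}y^{m-1})_i = 0$ for all $i$, and column competence of $\mathcal{A}$ forces $\mathcal{A}y^{m-1} = 0$; therefore $(\mathcal{B}x^{m-1})_i = d_i(\mathcal{A}y^{m-1})_i = 0$ for all $i$, i.e. $\mathcal{B}x^{m-1} = 0$. Thus $\mathcal{B} = D\mathcal{A}D$ is column competent.

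The argument is essentially the $P = Q = D$ specialization of the preceding proof, so there is no real obstacle; the only thing to watch is that one cannot literally invoke that theorem when $D$ is singular, which is why the short direct verification above is included. If the intended reading of the statement is that $D$ is nonsingular, the first step alone finishes the proof.
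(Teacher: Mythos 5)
Your direct verification is exactly the paper's own proof: both establish the identity $x_i(D\mathcal{A}Dx^{m-1})_i = (Dx)_i(\mathcal{A}(Dx)^{m-1})_i$ and then apply column competence of $\mathcal{A}$ to $y = Dx$, so the argument is correct and essentially identical. Your added observation that the preceding theorem only covers nonsingular $D$, making the direct computation necessary in general, is a sound point that the paper handles the same way (by not invoking that theorem at all).
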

\begin{proof}
Let $D=diag(d_1, \; d_2, \; ..., \; d_n)$ be a diagonal matrix in $\mathbb{R}^{n \times n}$ and $\mathcal{A} = (a_{i_1 i_2 ... i_m})\in T_{m,n}$ be column competent tensor of order $m$ and dimension $n$. Let $x\in \mathbb{R}^n$ be any vector such that $x_i(D \mathcal{A}D x^{m-1} )_i = 0$ for $i=1,2,..., n$. Then for all $i\in[n]$,
\begin{align*}
   x_i(D \mathcal{A}D x^{m-1} )_i % & = x_i  \sum_{j_1, j_2, ...,j_m \in [n] } p_{i_1 j_1} a_{j_1 j_2 ...j_m} q_{j_2 i_2} q_{j_3 i_3} \cdot \cdot \cdot q_{j_m i_m} x_{ i_2} x_{ i_3} \cdot \cdot \cdot x_{ i_m} \\
      & = x_i \sum_{ i_2, ...,i_m \in [n] } d_i a_{i i_2 ...i_m} d_{i_2} d_{i_3} \cdot \cdot \cdot d_{i_m}   x_{ i_2} x_{ i_3} \cdot \cdot \cdot x_{ i_m} \\
       & = (d_i x_i) \sum_{ i_2, ...,i_m \in [n] } a_{i i_2 ...i_m} (d_{i_2} x_{ i_2}) (d_{i_3}  x_{ i_3}) \cdot \cdot \cdot  (d_{i_m}x_{ i_m})\\
       & = (Dx)_i (\mathcal{A}(Dx)^{m-1})_i \\
       & = 0.
\end{align*}
Since $\mathcal{A}$ is column competent tensor, $\mathcal{A}(Dx)^{m-1} = 0.$ This implies $D \mathcal{A}D x^{m-1}=0$.
\end{proof}

Now we prove invariance property of column competent tensors under rearrangement of subscripts. 

\begin{theorem}
Let $\mathcal{A} \in T_{m,n}$ and $P\in \mathbb{R}^{n \times n}$ be a permutation matrix of order $n$. Then $\mathcal{A}$ is column competent tensor if and only if $P^T\mathcal{A} P$ is column competent tensor.
\end{theorem}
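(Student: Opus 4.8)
The plan is to follow the pattern of the diagonal-scaling theorem above, but with the permutation matrix $P$ relabelling the modes of $\mathcal{A}$ rather than rescaling them. First I would fix the permutation $\sigma$ associated with $P$ and compute, from Shao's general product formula \cite{shao2013general}, the entries of $\mathcal{B}=P^T\mathcal{A}P$ (the left factor $P^T$ acting on the first mode, the right factor $P$ on modes $2,\dots,m$, as in the diagonal case): one gets the pure index substitution $b_{i_1\cdots i_m}=a_{\sigma^{-1}(i_1)\cdots\sigma^{-1}(i_m)}$. Equivalently, and more convenient for the argument, I would record the identity $\mathcal{B}x^{m-1}=P^T\bigl(\mathcal{A}(Px)^{m-1}\bigr)$ valid for every $x\in\mathbb{R}^n$; this comes out by expanding the left-hand side into its three nested sums, interchanging the order of summation, and recognizing $\sum_{i_k}P_{j_k i_k}x_{i_k}=(Px)_{j_k}$.

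Next, set $y=Px$, so that $x=P^Ty$ since $P^{-1}=P^T$. Using the identity above together with the elementary fact that a permutation matrix commutes with the Hadamard product, $(P^Tu)\ast(P^Tv)=P^T(u\ast v)$, I would obtain $\psi_\mathcal{B}(x)=x\ast(\mathcal{B}x^{m-1})=(P^Ty)\ast\bigl(P^T(\mathcal{A}y^{m-1})\bigr)=P^T\psi_\mathcal{A}(y)$. Since $P^T$ is invertible this gives $\psi_\mathcal{B}(x)=0$ if and only if $\psi_\mathcal{A}(Px)=0$, i.e. $x_i(\mathcal{B}x^{m-1})_i=0$ for all $i$ if and only if $y_i(\mathcal{A}y^{m-1})_i=0$ for all $i$. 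If $\mathcal{A}$ is column competent, the latter forces $\mathcal{A}y^{m-1}=0$, and then $\mathcal{B}x^{m-1}=P^T(\mathcal{A}y^{m-1})=0$; hence $\mathcal{B}=P^T\mathcal{A}P$ is column competent.

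For the converse I would just apply the forward direction with $P$ replaced by the permutation matrix $P^T$: since $PP^T=I$ one has $(P^T)^T(P^T\mathcal{A}P)(P^T)=P(P^T\mathcal{A}P)P^T=\mathcal{A}$, so column competence of $P^T\mathcal{A}P$ yields column competence of $\mathcal{A}$. (Alternatively, reuse the computation verbatim with $P^{-1}=P^T$ in place of $P$.)

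The only genuine work — and the step where I would be most careful — is the bookkeeping inside the general tensor product: obtaining $b_{i_1\cdots i_m}=a_{\sigma^{-1}(i_1)\cdots\sigma^{-1}(i_m)}$ with the correct convention for which matrix acts on which mode, and correspondingly verifying $\mathcal{B}x^{m-1}=P^T\bigl(\mathcal{A}(Px)^{m-1}\bigr)$. Once that identity is in hand, the rest is a short conceptual argument, with no subtlety beyond the fact that permutations commute with the coordinatewise (Hadamard) operations.
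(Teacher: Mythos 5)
Your proposal is correct and follows essentially the same route as the paper: both reduce the claim to the change of variables $y=Px$, showing $x_i(\mathcal{B}x^{m-1})_i=0$ for all $i$ iff $y_i(\mathcal{A}y^{m-1})_i=0$ for all $i$, and both handle the converse by applying the forward direction to $P^{-1}=P^T$. Your packaging via the vector identity $\mathcal{B}x^{m-1}=P^T\bigl(\mathcal{A}(Px)^{m-1}\bigr)$ and the Hadamard commutation $(P^Tu)\ast(P^Tv)=P^T(u\ast v)$ is just a cleaner statement of the entrywise computation the paper carries out.
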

\begin{proof}
Let $P$ be a permutation matrix and $\mathcal{A} \in T_{m,n}$ be a  column competent tensor. Now consider the tensor $\mathcal{B}= P^T \mathcal{A} P.$ Then 
\[ b_{i_1 i_2 ... i_m} = \sum_{j_1, j_2, ..., j_m \in [n]} p_{j_1 i_1} a_{j_1 j_2 ...j_m} p_{j_2 i_2} p_{j_3 i_3} \cdots p_{j_m i_m}. \]
Since $P$ is a permutation matrix, only one entry in each row and column is one. Without loss of generality, for any $i\in [n],$ suppose $p_{i^{\prime}i}=1$ and $p_{ji}=0, \; j\neq i^{\prime}, \; j \in [n].$ Then we have 
\begin{align*}
    b_{i_1 i_2 ... i_m} =& \sum_{j_1, j_2, ..., j_m \in [n]} p_{j_1 i_1} a_{j_1 j_2 ...j_m} p_{j_2 i_2} p_{j_3 i_3} \cdots p_{j_m i_m}\\
    =& p_{i_1^{\prime} i_1} a_{i_1^{\prime} i_2^{\prime} \dots i_m^{\prime}} p_{i_2^{\prime} i_2} p_{i_3^{\prime} i_3} \cdots p_{i_m^{\prime} i_m}.
\end{align*}
Consider $x\in \mathbb{R}^n$ such that $x_i(\mathcal{B}x^{m-1})_i = 0, \; \forall \; i\in [n].$ Then,
\begin{align*}
     x_i(\mathcal{B}x^{m-1})_i  & = x_i \sum_{ i_2^{\prime}, ...,i_m^{\prime} \in [n]} p_{i^{\prime} i} a_{i^{\prime} i_2^{\prime} \dots i_m^{\prime}} p_{i_2^{\prime} i_2} p_{i_3^{\prime} i_3} \cdots p_{i_m^{\prime} i_m} x_{ i_2} x_{ i_3} \cdot \cdot \cdot x_{ i_m} \\
      & = p_{i^{\prime} i} x_i \sum_{ i_2^{\prime}, ...,i_m^{\prime} \in [n]}  a_{i^{\prime} i_2^{\prime} \dots i_m^{\prime}} p_{i_2^{\prime} i_2} p_{i_3^{\prime} i_3} \cdots p_{i_m^{\prime} i_m} x_{ i_2} x_{ i_3} \cdot \cdot \cdot x_{ i_m} \\
       & = y_{i^{\prime}} \sum_{ i_2^{\prime}, ...,i_m^{\prime} \in [n]}  a_{i^{\prime} i_2^{\prime} \dots i_m^{\prime}} y_{i_{2^{\prime}}} y_{i_{3^{\prime}}} \cdots y_{i_{m^{\prime}}} \\
       & = y_{i^{\prime}}(\mathcal{A}y^{m-1})_{i^{\prime}} \\
       & = 0,
\end{align*}
for $y=P x \in \mathbb{R}^n$ i.e. $x_i(\mathcal{B}x^{m-1})_i = y_{i^{\prime}}(\mathcal{A}y^{m-1})_{i^{\prime}} = 0, \; \forall \; i \in [n].$ This implies $\mathcal{A}y^{m-1}= 0$, since $\mathcal{A}$ is column competent tensor. Then $(\mathcal{B}x^{m-1})_i = p_{i^{\prime}i}(\mathcal{A}y^{m-1})_{i^{\prime}} = 0,$ $\forall \; i\in [n],$ since $\mathcal{A}y^{m-1}= 0.$
Thus $x_i(\mathcal{B}x^{m-1})_i = 0, \; \forall \; i \in [n] \implies \mathcal{B}x^{m-1}=0 $. Hence, $\mathcal{B} = P^T \mathcal{A} P$ is column competent tensor.

Conversely, Let $\mathcal{B}= P^T \mathcal{A} P$ be column competent tensor. Since $P$ is a permutation matrix, then so is $P^{-1}$ and we have, $P^{-1}=P^T$ and $(P^{-1})^T=P$. Then by the first part of the proof, $(P^{-1})^T \mathcal{B} P^{-1}$ is column competent tensor. Now $(P^{-1})^T \mathcal{B} P^{-1}= (P^T)^{-1} (P^T \mathcal{A} P) P^{-1}= \mathcal{A} $. % (by associativity of tensor multiplication).
Hence $\mathcal{A}$ is column competent tensor.
\end{proof}

%\begin{remark}
%Let $\mathcal{A} \in T_{m,n}$ be a column adequate tensor. Then it follows from the definition that $\mathcal{A}$ is column competent tensor. But not all column competent tensors are column adequate tensor. 
%\end{remark}

By definition column adequate tensor is also a column competent tensor however converse does not hold in general.

\begin{example}
Consider the tensor $\mathcal{A} \in T_{4,2}$ where, 
$ a_{1111}=-1, ~a_{1112}=-1,  ~a_{1121}=1$ and all other entries of $\mathcal{A}$ are zeros. Then for $x=\left(
	\begin{array}{c}
	 x_1 \\
	 x_2
	 	\end{array}
\right) \in \mathbb{R}^2$ we have $\mathcal{A}x^{3}=
\left(
	\begin{array}{c}
	-x_1^3 \\
	0
	 	\end{array}
\right)$
and $x_1 (\mathcal{A}x^{3})_1 = -x_1^4, ~ x_2 (\mathcal{A}x^{3})_2 = 0$. Now $x_i (\mathcal{A}x^{3})_i =0, ~\forall \; i \in [2], ~\implies x=\left(\begin{array}{c}
     0  \\
     k 
\end{array}
\right)$ where $k\in \mathbb{R}$, which implies $\mathcal{A}x^{3}=0$. Therefore $\mathcal{A}$ is column competent tensor. But this tensor is not column adequate tensor, since $x=\left(\begin{array}{c}
     1  \\
     2 
\end{array}
\right)$ implies $x_i (\mathcal{A}x^{3})_i \leq 0 ~\forall \; i \in [n]$, but $\mathcal{A}x^{3}=\left(\begin{array}{c}
     -1  \\
     0 
\end{array}
\right).$
\end{example}

Now we show the condition under which column competent tensor is a column adequate tensor.

\begin{theorem}
Let $\mathcal{A}\in T_{m,n}$ be a tensor of even order. If $\mathcal{A}$ is column competent and positive semidefinite tensor then $\mathcal{A}$ is column adequate tensor.
\end{theorem}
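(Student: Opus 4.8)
The plan is to exploit the identity $\sum_{i\in[n]} x_i(\mathcal{A}x^{m-1})_i = x^T\mathcal{A}x^{m-1} = \mathcal{A}x^m$ together with the two hypotheses. Take any $x\in\mathbb{R}^n$ with $x_i(\mathcal{A}x^{m-1})_i \le 0$ for all $i\in[n]$; the goal is to show $\mathcal{A}x^{m-1}=0$. Summing the $n$ nonpositive quantities $x_i(\mathcal{A}x^{m-1})_i$ gives $\mathcal{A}x^m \le 0$. On the other hand, since $\mathcal{A}$ is positive semidefinite (here is where the even order of $m$ matters, since otherwise positive semidefiniteness is vacuous or degenerate), we have $\mathcal{A}x^m \ge 0$. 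Hence $\mathcal{A}x^m = 0$, i.e. $\sum_{i\in[n]} x_i(\mathcal{A}x^{m-1})_i = 0$.

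Next I would observe that a sum of nonpositive real numbers equals zero only if every summand is zero; therefore $x_i(\mathcal{A}x^{m-1})_i = 0$ for each $i\in[n]$. At this point the column competence of $\mathcal{A}$ applies directly and yields $\mathcal{A}x^{m-1}=0$. Since $x$ was an arbitrary vector satisfying the column-adequacy hypothesis, this proves that $\mathcal{A}$ is a column adequate tensor.

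There is no serious obstacle here; the argument is a short chaining of the three hypotheses. The only point worth stating carefully is why even order is invoked: for even $m$ the scalar $\mathcal{A}x^m$ is a genuine quartic-type form on which positive semidefiniteness is a substantive condition, and it is precisely the nonnegativity $\mathcal{A}x^m\ge 0$ that upgrades the coordinatewise inequality $x_i(\mathcal{A}x^{m-1})_i\le 0$ to the coordinatewise equality needed to trigger column competence. I would present the computation $\sum_i x_i(\mathcal{A}x^{m-1})_i=\mathcal{A}x^m$ explicitly (it is immediate from the definitions of $\mathcal{A}x^{m-1}$ and $\mathcal{A}x^m$ recalled in Section~2) and then state the squeeze $0\le \mathcal{A}x^m\le 0$ and the "nonpositive summands summing to zero" step in one or two lines each.
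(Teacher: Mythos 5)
Your proposal is correct and follows the same route as the paper's own proof: sum the nonpositive terms to get $\mathcal{A}x^m\le 0$, use positive semidefiniteness to force $\mathcal{A}x^m=0$, conclude each summand vanishes, and invoke column competence. In fact you make explicit the one step the paper leaves implicit (that a zero sum of nonpositive reals forces every term to be zero), which is a small improvement in exposition.
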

\begin{proof}
Let $\mathcal{A}$ be both column competent tensor and positive semidefinite tensor. We show that $\mathcal{A}$ is column adequate tensor. Let for some $x\in \mathbb{R}^n$ we have
\begin{align}\label{PSD col competent equation 1}
    &  x_i (\mathcal{A}x^{m-1})_i \leq 0, ~\forall \; i \in [n]\\
    \implies & \sum_{i=1}^{n} x_i (\mathcal{A}x^{m-1})_i = \mathcal{A}x^m  \leq 0\\
    \implies & \mathcal{A}x^m =0 ,\mbox{ since } \mathcal{A} \mbox{ is positive semidefinite tensor}. \label{PSD col competent equation 2}
\end{align}
Then by (\ref{PSD col competent equation 1}) and (\ref{PSD col competent equation 2}) we have $ x_i (\mathcal{A}x^{m-1})_i =0, \forall \; i \in [n]  \implies \mathcal{A}x^{m-1} =0 $, since $\mathcal{A}$ is a column competent tensor. Thus for arbitrary $x\in \mathbb{R}^n$, $x_i (\mathcal{A}x^{m-1})_i \leq 0, ~\forall \; i \in [n] \implies \mathcal{A}x^{m-1} =0 $. Therefore $\mathcal{A}$ is column adequate tensor.
\end{proof}

\begin{remark}
Converse of the above theorem may not be true. Below we give an example of a column adequate tensor which is not positive semidefinite.
\end{remark}

\begin{example}
Let $\mathcal{A}\in T_{4,2},$ such that $a_{1 1 1 1}=2, ~a_{1 1 1 2}=1, ~a_{2 1 2 2}=4, ~a_{2 2 2 2}=2$ and all other entries of $\mathcal{A}$ are zeros. Then $\mathcal{A}$ is a column adequate tensor but not positive semidefinite tensor. 
\end{example}

\begin{proposition}
Let $\mathcal{A}\in T_{m,n}$ be a $Z$-tensor and $x\in \mathbb{R}^n$. Assume  $x_i (\mathcal{A}x^{m-1})_i = 0,~ \forall \; i\in[n],~ \mathcal{A}|x|^{m-1} \geq 0 $ and $\mathcal{A}x^{m-1} \leq 0$ . Then $\mathcal{A}$ is column competent tensor.
\end{proposition}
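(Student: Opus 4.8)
The plan is to read the hypotheses as holding for an arbitrary $x$ at which $x_i(\mathcal{A}x^{m-1})_i=0$ for all $i\in[n]$, and then to show that under the stated sign conditions such an $x$ must satisfy $\mathcal{A}x^{m-1}=0$. Since a vector $x$ with $x_i(\mathcal{A}x^{m-1})_i=0$ for every $i$ is precisely the kind of vector that occurs in the definition of a column competent tensor, establishing $\mathcal{A}x^{m-1}=0$ for each such $x$ is exactly what is required.

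First I would fix such an $x$ and partition the index set as $S=\{i\in[n]:x_i\neq 0\}$ and $S^{c}=\{i\in[n]:x_i=0\}$. For $i\in S$ the hypothesis $x_i(\mathcal{A}x^{m-1})_i=0$ together with $x_i\neq 0$ gives $(\mathcal{A}x^{m-1})_i=0$ immediately, so the real work is confined to $S^{c}$.

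Next, for $i\in S^{c}$ I would expand
\[(\mathcal{A}x^{m-1})_i=\sum_{i_2,\dots,i_m\in[n]}a_{i i_2\cdots i_m}x_{i_2}\cdots x_{i_m}\]
and note that every summand in which some $i_j\notin S$ vanishes (the factor $x_{i_j}$ is zero), so only the strings with $i_2,\dots,i_m\in S$ survive; moreover, since $i\in S^{c}$ while $i_2\in S$ we have $i\neq i_2$, hence each surviving coefficient $a_{i i_2\cdots i_m}$ is off-diagonal and therefore $\leq 0$ because $\mathcal{A}$ is a $Z$-tensor. Carrying out the same expansion for $\mathcal{A}|x|^{m-1}$ gives $(\mathcal{A}|x|^{m-1})_i=\sum_{i_2,\dots,i_m\in S}a_{i i_2\cdots i_m}|x_{i_2}|\cdots|x_{i_m}|$, a sum of nonpositive terms since each factor $|x_{i_j}|>0$. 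The hypothesis $\mathcal{A}|x|^{m-1}\geq 0$ forces this sum to equal $0$, and because every term is $\leq 0$ and every $|x_{i_j}|>0$, each coefficient $a_{i i_2\cdots i_m}$ with $i_2,\dots,i_m\in S$ must vanish. Substituting this back into the first expansion yields $(\mathcal{A}x^{m-1})_i=0$ for $i\in S^{c}$ as well (consistent with, and in fact stronger than, $\mathcal{A}x^{m-1}\leq 0$; it is really the $Z$-property together with $\mathcal{A}|x|^{m-1}\geq 0$ that do the work). Combining the two cases gives $\mathcal{A}x^{m-1}=0$, which is the column competence of $\mathcal{A}$.

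The one delicate point — and the step I would be most careful about — is the index bookkeeping behind the word ``off-diagonal'': one must verify that whenever $i\in S^{c}$ and $i_2,\dots,i_m\in S$ the string $(i,i_2,\dots,i_m)$ is not constant, so that $Z$-nonpositivity of $a_{i i_2\cdots i_m}$ is legitimate. This is immediate from $i\neq i_2$ (using $m\geq 2$, so that an index $i_2$ exists), but it is the hinge on which the whole argument turns; the remaining manipulations are routine expansion and sign comparison.
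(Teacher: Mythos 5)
Your proof is correct, but it takes a genuinely different route from the paper's. The paper argues globally by comparing $\mathcal{A}x^{m-1}$ with $\mathcal{A}|x|^{m-1}$: it asserts that for a $Z$-tensor $\mathcal{A}x^{m-1}=\mathcal{A}|x|^{m-1}$ when $m$ is odd and $\mathcal{A}x^{m-1}\geq \mathcal{A}|x|^{m-1}$ when $m$ is even, so that $\mathcal{A}|x|^{m-1}\geq 0$ yields $\mathcal{A}x^{m-1}\geq 0$, and the remaining hypothesis $\mathcal{A}x^{m-1}\leq 0$ then forces $\mathcal{A}x^{m-1}=0$. You instead work locally on the support $S$ of $x$: complementarity handles $i\in S$, and for $i\notin S$ you show that every surviving coefficient $a_{i i_2\cdots i_m}$ with $i_2,\dots,i_m\in S$ is off-diagonal, hence nonpositive by the $Z$-property, and is then forced to vanish by $\mathcal{A}|x|^{m-1}\geq 0$. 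Your route buys two things. First, you never invoke $\mathcal{A}x^{m-1}\leq 0$, so your argument shows that hypothesis is redundant. Second, it is on firmer ground: the paper's parity-based comparisons are asserted without justification and are delicate, since the $Z$-condition says nothing about the diagonal entries $a_{ii\cdots i}$ (for $m$ even and $x_i<0$ a positive diagonal entry gives $a_{ii\cdots i}x_i^{m-1}<a_{ii\cdots i}|x_i|^{m-1}$, and for $m$ odd a mixed-sign off-diagonal product breaks the claimed equality), whereas your support decomposition sidesteps the diagonal terms entirely because for $i\notin S$ the constant string $(i,\dots,i)$ does not survive. Both proofs share the proposition's inherited awkwardness of placing hypotheses on a single $x$ while concluding a universally quantified property; you adopt the same reading as the paper, which is the only sensible one.
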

\begin{proof}
Let $\mathcal{A}$ be a $Z$-tensor. Suppose $x_i(\mathcal{A}x^{m-1})_i =0, \forall \; i \in [n], ~\mathcal{A}|x|^{m-1} \geq 0$ and $\mathcal{A}x^{m-1} \leq 0$. As $\mathcal{A}$ is a $Z$-tensor, 
\[ \mathcal{A}x^{m-1}= \mathcal{A}|x|^{m-1} \geq 0, \mbox{ if } m \mbox{ is odd } .\]
 
\[ \mathcal{A}x^{m-1} \geq \mathcal{A}|x|^{m-1} \geq 0, \mbox{ if } m \mbox{ is even } .\]
This implies $\mathcal{A}x^{m-1} =0$. Therefore $\mathcal{A}$ is column competent tensor.
\end{proof}

Here is an example showing that not all $Z$-tensors are column competent tensors.

\begin{example}
Let $\mathcal{A}\in T_{3,2}$ be such that $a_{122}=-1$ and all other entries of $\mathcal{A}$ are zeros. Then $\mathcal{A}$ is a $Z$-tensor. Then for $x=\left(
	\begin{array}{c}
	 x_1 \\
	 x_2
	 	\end{array}
\right) \in \mathbb{R}^2$ we have $\mathcal{A}x^2 =\left( \begin{array}{c}
     -x_2^2 \\
     0
\end{array} \right )$, and $x_1(\mathcal{A}x^2)_1 = -x_1x_2^2$, $x_2(\mathcal{A}x^2)_2 = 0.$ For $x=\left( \begin{array}{c}
     0 \\
     5
\end{array} \right)$ we obtain $x_i(\mathcal{A}x^2)_i = 0, $ for $i=1,2$, but $\mathcal{A}x^2=\left( \begin{array}{c}
     -25 \\
     0
\end{array}\right)$. Therefore $\mathcal{A}$ is not a column competent tensor.
\end{example}

\begin{proposition}
Suppose $\mathcal{A}\in T_{m,n}$ is column competent tensor with $\mathcal{A}\in P_0.$ Then for $0\neq x \geq 0, ~ \left( \begin{array}{c}
     x \\
     0
\end{array}\right)$ is the solution of TCP$(0,\mathcal{A})$.
\end{proposition}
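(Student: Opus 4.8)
The plan is to check that the vector $y:=\left(\begin{array}{c}x\\ 0\end{array}\right)$ meets the three requirements for membership in SOL$(0,\mathcal{A})$: $y\ge 0$, $\mathcal{A}y^{m-1}\ge 0$, and $y^{T}\mathcal{A}y^{m-1}=0$. Nonnegativity is immediate from $x\ge 0$, and I would aim to reduce the remaining two conditions to the single identity $\mathcal{A}y^{m-1}=0$; granting that identity, $\mathcal{A}y^{m-1}=0\ge 0$ is feasibility and $y^{T}\mathcal{A}y^{m-1}=y^{T}0=0$ is complementarity, so $y$ would indeed solve TCP$(0,\mathcal{A})$.

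Everything therefore comes down to showing $\mathcal{A}y^{m-1}=0$, and column competence is exactly the mechanism: by the definition of a column competent tensor it suffices to prove $y_i(\mathcal{A}y^{m-1})_i=0$ for every $i\in[n]$. Off the support of $x$ this holds because $y_i=0$, so the only work is on $\mathrm{supp}(x)$. On that block I would combine the $P_0$ hypothesis --- which guarantees an index $j$ with $y_j\ne 0$ and $y_j(\mathcal{A}y^{m-1})_j\ge 0$ --- with the complementarity/feasibility information built into the hypothesis of the statement, and then descend to the principal subtensor $\mathcal{A}^{J}_{r}$ with $J=\mathrm{supp}(x)$, which inherits both column competence (by the inheritance theorem proved earlier) and the $P_0$ property; this recasts the support-block claim as a smaller instance of the same problem, from which I expect to recover $(\mathcal{A}y^{m-1})_i=0$ for all $i\in J$. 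Together with the automatic vanishing off $J$, column competence of $\mathcal{A}$ then forces $\mathcal{A}y^{m-1}=0$, which closes the argument by the reduction above.

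I expect the main obstacle to be the coordinates lying outside $\mathrm{supp}(x)$ --- that is, establishing that $y$ is genuinely feasible there --- since column competence says nothing about those coordinates and the $P_0$ property only ever yields a good index inside $\mathrm{supp}(x)$. The leverage to use is the special form $q=0$ together with the fact that the zero block of $y$ decouples these coordinates, where $(\mathcal{A}y^{m-1})_i=\sum_{i_2,\dots,i_m\in\mathrm{supp}(x)}a_{i i_2\cdots i_m}x_{i_2}\cdots x_{i_m}$; pinning down the sign of these cross terms, and then feeding back into the column-competence step to kill them as well, is the part that will require care, whereas the appeals to column competence and to the inheritance of column competence and of the $P_0$ property are routine.
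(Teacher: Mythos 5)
Your reduction to proving $\mathcal{A}x^{m-1}=0$ is the right target: in the paper's notation $\bigl(\begin{smallmatrix}x\\0\end{smallmatrix}\bigr)$ denotes the solution pair whose $\omega$-part is $0$, so one must show $\omega=\mathcal{A}x^{m-1}+0=0$, and your zero-padded-vector reading leads to the same requirement. The genuine gap is precisely the step you flag as ``requiring care'': nothing in the hypotheses lets you establish $x_i(\mathcal{A}x^{m-1})_i=0$ for all $i$ in $\mathrm{supp}(x)$, which is what you need to fire the column-competence implication. The $P_0$ property supplies only \emph{one} index $j$ with $x_j\neq 0$ and $x_j(\mathcal{A}x^{m-1})_j\geq 0$ --- an inequality at a single coordinate, not an equality at every coordinate --- and there is no ``complementarity/feasibility information built into the hypothesis'' to combine it with, since the hypothesis is just $0\neq x\geq 0$. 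Descending to the principal subtensor on $\mathrm{supp}(x)$ does not help either: the subproblem is an instance of exactly the same difficulty with a strictly positive vector. So the trigger for column competence is never pulled and the argument cannot close.

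In fact no argument can close it, because the proposition as literally stated is false. For even $m$ the identity tensor $\mathcal{I}_m$ is nondegenerate (hence column competent) and is a $P$-tensor, yet for $x=e>0$ one has $\mathcal{I}_m x^{m-1}=x^{[m-1]}=e\neq 0$, so $\bigl(\begin{smallmatrix}e\\0\end{smallmatrix}\bigr)$ is not a solution of TCP$(0,\mathcal{I}_m)$. The paper's own proof quietly inserts the missing hypothesis: after invoking $P_0$ it continues ``if $x_i(\mathcal{A}x^{m-1})_i=0$ for all $i\in[n]$, then $\mathcal{A}x^{m-1}=0$,'' i.e.\ it \emph{assumes} the very complementarity condition your plan is trying to derive (and once that is assumed, the $P_0$ hypothesis plays no role and the statement collapses to the immediately following Proposition). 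Your attempt is therefore not salvageable as written; the honest repair is to add ``$x_i(\mathcal{A}x^{m-1})_i=0$ for all $i\in[n]$'' to the hypotheses, after which the proof is a one-line application of column competence.
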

\begin{proof}
Let $\mathcal{A}\in T_{m,n}$ be a column competent tensor with $\mathcal{A}\in P_0 .$ Then for any vector $0\neq x \in\mathbb{R}^n$, there exists $i \in [n]$ such that $x_i \neq 0$ and $x_i(\mathcal{A}x^{m-1})_i \geq 0$. If $x_i(\mathcal{A}x^{m-1})_i =0, \; \forall \; i \in [n]$ implies that $\mathcal{A}x^{m-1} =0$. 
Then $\left( \begin{array}{c}
     x \\
     0
\end{array}\right), \; x \geq 0 $ is the solution of TCP$(0, \mathcal{A})$ .
\end{proof}

\begin{proposition}
Let $\mathcal{A}$ be column competent tensor. Assume $x\geq 0$ and $x_i (\mathcal{A}x^{m-1})_i = 0$, $i=1,2, ..., n$. Then TCP$(0,\mathcal{A})$ has the solution $\left( \begin{array}{c}
     x \\
     0
\end{array}\right)$.
\end{proposition}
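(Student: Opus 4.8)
The plan is to apply the column competence of $\mathcal{A}$ to the given vector $x$ and then verify, one at a time, the three defining conditions of a solution of TCP$(0,\mathcal{A})$. The argument runs parallel to the preceding proposition, but now under the weaker hypothesis that $x\geq 0$ together with the equalities $x_i(\mathcal{A}x^{m-1})_i=0$, with no assumption that $\mathcal{A}\in P_0$.

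First I would note that the hypothesis $x_i(\mathcal{A}x^{m-1})_i=0$ for all $i\in[n]$ is precisely the antecedent in the definition of a column competent tensor; since $\mathcal{A}$ is column competent, this forces $\mathcal{A}x^{m-1}=0$. Next, taking $q=0$ and setting $\omega=\mathcal{A}x^{m-1}+q=\mathcal{A}x^{m-1}=0$, I would check the conditions in (\ref{ Tensor Complementarity problem 2}): $x\geq 0$ holds by hypothesis; $\omega=0\geq 0$; and $x^{T}\omega=x^{T}0=0$. Hence the pair $(\omega,x)=(0,x)$ satisfies $x\geq 0$, $\mathcal{A}x^{m-1}+q\geq 0$, $x^{T}(\mathcal{A}x^{m-1}+q)=0$ with $q=0$, so $\left(\begin{array}{c} x \\ 0 \end{array}\right)$ is a solution of TCP$(0,\mathcal{A})$.

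I do not expect any genuine obstacle here: the entire content is the single reduction step supplied by column competence, namely $x_i(\mathcal{A}x^{m-1})_i=0\ \forall\, i\in[n]\implies \mathcal{A}x^{m-1}=0$; everything after that is immediate from the definition of the tensor complementarity problem with zero constant vector.
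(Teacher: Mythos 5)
Your argument is correct and matches the paper's own proof: both apply the column competence hypothesis to conclude $\mathcal{A}x^{m-1}=0$ from $x_i(\mathcal{A}x^{m-1})_i=0$ for all $i\in[n]$, and then observe that the pair $(\omega,x)=(0,x)$ satisfies the defining conditions of TCP$(0,\mathcal{A})$. Your write-up is merely a bit more explicit than the paper in checking the three conditions one by one.
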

\begin{proof}
Since $\mathcal{A}$ is column competent tensor then for $x \geq$  and $x_i (\mathcal{A} x^{m-1})_i =0,~\forall \; i \in [n] $ implies $\mathcal{A}x^{m-1}=0$. Therefore $\left( \begin{array}{c}
     x \\
     0
\end{array}\right)$ is the solution of TCP$(0,\mathcal{A})$.
\end{proof}

Now we prove a necessary and sufficient condition for $\mathcal{A}$ to be a column competent tensor.

\begin{theorem}
Let $\mathcal{A}\in T_{m,n}$. Then $\mathcal{A}$ is column competent iff $ker\; \psi_\mathcal{A}= ker\;\mathcal{A} $.
\end{theorem}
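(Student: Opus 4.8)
The plan is to observe that one of the two inclusions between $\ker\,\psi_\mathcal{A}$ and $\ker\,\mathcal{A}$ holds unconditionally, while the other inclusion is literally a restatement of column competence; equality is then equivalent to column competence.

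First I would unpack the definition of $\psi_\mathcal{A}$ componentwise: since $\psi_\mathcal{A}(x)=x*(\mathcal{A}x^{m-1})$ and $(u*v)_i=u_iv_i$, we have $\psi_\mathcal{A}(x)_i=x_i(\mathcal{A}x^{m-1})_i$ for every $i\in[n]$. Hence
\[
\ker\,\psi_\mathcal{A}=\{x\in\mathbb{R}^n: x_i(\mathcal{A}x^{m-1})_i=0,\ \forall\, i\in[n]\},\qquad
\ker\,\mathcal{A}=\{x\in\mathbb{R}^n: \mathcal{A}x^{m-1}=0\}.
\]
Next I would record the trivial inclusion $\ker\,\mathcal{A}\subseteq\ker\,\psi_\mathcal{A}$: if $\mathcal{A}x^{m-1}=0$, then in particular $x_i(\mathcal{A}x^{m-1})_i=x_i\cdot 0=0$ for all $i$, so $x\in\ker\,\psi_\mathcal{A}$. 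This holds for every tensor $\mathcal{A}\in T_{m,n}$, with no hypothesis.

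With that in hand, the equivalence is immediate. By the componentwise description above, the statement ``$\mathcal{A}$ is column competent'' reads: for all $x$, $x\in\ker\,\psi_\mathcal{A}$ implies $\mathcal{A}x^{m-1}=0$, i.e. $x\in\ker\,\mathcal{A}$; that is exactly the inclusion $\ker\,\psi_\mathcal{A}\subseteq\ker\,\mathcal{A}$. Combining this with the unconditional reverse inclusion $\ker\,\mathcal{A}\subseteq\ker\,\psi_\mathcal{A}$ gives: $\mathcal{A}$ is column competent $\iff$ $\ker\,\psi_\mathcal{A}\subseteq\ker\,\mathcal{A}$ $\iff$ $\ker\,\psi_\mathcal{A}=\ker\,\mathcal{A}$, which is the claim.

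There is no real obstacle here; the only thing to be careful about is the componentwise translation of $\psi_\mathcal{A}$ and making explicit that the ``$\supseteq$'' inclusion of kernels is automatic, so that the content of the theorem is entirely carried by the ``$\subseteq$'' direction. I would keep the write-up to these few lines rather than introducing any extra machinery.
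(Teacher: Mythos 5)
Your proof is correct and follows essentially the same route as the paper's: both identify column competence with the inclusion $\ker\,\psi_\mathcal{A}\subseteq\ker\,\mathcal{A}$ and note that the reverse inclusion $\ker\,\mathcal{A}\subseteq\ker\,\psi_\mathcal{A}$ holds unconditionally. Your version merely states the trivial inclusion up front, which makes the equivalence slightly cleaner, but the content is identical.
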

\begin{proof}
Let $ker\; \psi_{\mathcal{A}} = ker \; \mathcal{A}$. Then for $x\in \mathbb{R}^n,$ $x_i(\mathcal{A}x^{m-1})_i =0, ~\forall \;i \in [n]$, implies $x \in ker \; \psi_{\mathcal{A}}  $. This implies $x \in ker\; \mathcal{A} $ $\implies \mathcal{A}x^{m-1}= 0  $. Therefore $\mathcal{A}$ is a column competent tensor.

Conversely, let $\mathcal{A}$ be column competent tensor, then for $x \in \mathbb{R}^n$, the condition $x_i (\mathcal{A}x^{m-1})_i = 0, ~\forall \; i \in [n] \implies \mathcal{A}x^{m-1}=0$. This implies if $x\in ker\; \psi_{\mathcal{A}} $ then $x\in ker \;\mathcal{A}$. Therefore $ker\; \psi_{\mathcal{A}} \subseteq ker\; \mathcal{A} $.
By definition $ker \; \mathcal{A} \subseteq  ker\; \psi_{\mathcal{A}}  $. 
Therefore $ker\; \psi_{\mathcal{A}} = ker\; \mathcal{A}.$
\end{proof}

Now we establish a connection between column competent tensors and nondegenerate tensors.

\begin{proposition}
Let $\mathcal{A}\in T_{m,n}$ be nondegenerate tensor. Then $\mathcal{A}\in R_0$.
\end{proposition}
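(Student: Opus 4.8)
The plan is to argue by contradiction directly from the two definitions. Suppose $\mathcal{A}$ is nondegenerate but $\mathcal{A}\notin R_0$. By the definition of $R_0$-tensor (the failure of system (\ref{definition of R0})), there exists a nonzero vector $x\in \mathbb{R}^n_+\setminus\{0\}$ satisfying $(\mathcal{A}x^{m-1})_i = 0$ whenever $x_i > 0$ and $(\mathcal{A}x^{m-1})_i \geq 0$ whenever $x_i = 0$. I would fix such an $x$ and examine the products $x_i(\mathcal{A}x^{m-1})_i$ coordinatewise.

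The key observation is a simple case split on each index $i\in[n]$. If $x_i > 0$, then $(\mathcal{A}x^{m-1})_i = 0$, so the product $x_i(\mathcal{A}x^{m-1})_i = 0$. If $x_i = 0$, then trivially $x_i(\mathcal{A}x^{m-1})_i = 0$ regardless of the value of $(\mathcal{A}x^{m-1})_i$. Hence in either case $x_i(\mathcal{A}x^{m-1})_i = 0$ for all $i\in[n]$.

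Now apply the definition of nondegenerate tensor: $x_i(\mathcal{A}x^{m-1})_i = 0$ for all $i\in[n]$ forces $x = 0$, which contradicts the choice of $x$ as a nonzero vector. Therefore no such $x$ exists, i.e. system (\ref{definition of R0}) has no nonzero solution, so $\mathcal{A}\in R_0$.

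There is essentially no obstacle here; the only point requiring a moment's care is noting that the sign condition $(\mathcal{A}x^{m-1})_i\geq 0$ on the coordinates where $x_i=0$ plays no role, since those products vanish automatically — so nondegeneracy, which only sees the vanishing of all products $x_i(\mathcal{A}x^{m-1})_i$, is exactly the hypothesis needed. I would keep the write-up to a few lines.
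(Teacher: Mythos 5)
Your proof is correct and follows essentially the same route as the paper: both arguments take the putative nonzero vector satisfying the $R_0$-defining system, observe that $x_i(\mathcal{A}x^{m-1})_i=0$ for every $i$, and invoke nondegeneracy to force $x=0$. The only cosmetic difference is that the paper phrases the violating vector as a solution of TCP$(0,\mathcal{A})$ while you work directly with system (\ref{definition of R0}); the substance is identical.
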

\begin{proof}
Let $\mathcal{A} \in T_{m,n} $ be nondegenerate tensor. Then $ker \;\psi_\mathcal{A} = \{0 \}$, where $\psi_{\mathcal{A}}(x) = x * \mathcal{A}x^{m-1}$. Let $x$ be the solution of the TCP$(0,\mathcal{A}).$ Then $x_i (\mathcal{A}x^{m-1})_i =0,~ \forall \; i \in [n]$.
This implies $x=0.$ Therefore TCP$(0,\mathcal{A})$ has unique null solution. Hence $\mathcal{A}$ is $R_0 $-tensor.
\end{proof}

%It is proved for tensor with nondegenerate property not for non degenerate tensor. Though nondegenerate tensor contais tensors with nondegenerate property we are proving this here.

\begin{theorem}
If $\mathcal{A} $ be nondegenerate tensor, then $\mathcal{A}$ is column competent tensor.
\end{theorem}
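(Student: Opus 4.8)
The plan is a direct definition chase. Assume $\mathcal{A}\in T_{m,n}$ is nondegenerate and pick any $x\in\mathbb{R}^n$ with $x_i(\mathcal{A}x^{m-1})_i=0$ for all $i\in[n]$. By the definition of a nondegenerate tensor this premise already forces $x=0$. Now observe that for $x=0$ each term of $(\mathcal{A}x^{m-1})_i=\sum_{i_2,\dots,i_m}a_{i i_2\cdots i_m}x_{i_2}\cdots x_{i_m}$ carries a zero factor, so $\mathcal{A}x^{m-1}=0$. Hence $x_i(\mathcal{A}x^{m-1})_i=0$ for all $i$ implies $\mathcal{A}x^{m-1}=0$, which is precisely the defining condition for $\mathcal{A}$ to be column competent.

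An equivalent route is through the kernel characterization proved above (namely, $\mathcal{A}$ is column competent iff $ker\;\psi_{\mathcal{A}}=ker\;\mathcal{A}$). Nondegeneracy is exactly the statement $ker\;\psi_{\mathcal{A}}=\{0\}$; since $0\in ker\;\mathcal{A}$ for every tensor, we get $ker\;\psi_{\mathcal{A}}\subseteq ker\;\mathcal{A}$, while the reverse inclusion $ker\;\mathcal{A}\subseteq ker\;\psi_{\mathcal{A}}$ holds trivially. Thus the two kernels coincide and the cited theorem delivers column competence. I would present whichever of these two phrasings reads more smoothly in context; they are the same argument.

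I expect no genuine obstacle here: both hypotheses are implications triggered by the same condition $x_i(\mathcal{A}x^{m-1})_i=0$ for all $i$, and the conclusion of nondegeneracy ($x=0$) is strictly stronger than the conclusion of column competence ($\mathcal{A}x^{m-1}=0$), so the implication is immediate. The only point needing (entirely routine) verification is that $\mathcal{A}0^{m-1}=0$. As a follow-up one could note that the converse fails: the tensor in Example~\ref{first example} is column competent yet vanishes on a nontrivial set (e.g.\ at $(1,0)^T$), hence is degenerate; but that observation belongs in a separate remark rather than in this proof.
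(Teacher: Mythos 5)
Your proposal is correct and follows essentially the same route as the paper: the hypothesis $x_i(\mathcal{A}x^{m-1})_i=0$ for all $i$ forces $x=0$ by nondegeneracy, and then $\mathcal{A}x^{m-1}=\mathcal{A}0^{m-1}=0$, which is the column competence condition. Your explicit justification that $\mathcal{A}0^{m-1}=0$ and the alternative kernel phrasing are harmless elaborations of the same one-line argument the paper gives.
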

\begin{proof}
Let $\mathcal{A} \in T_{m,n}$ be a nondegenerate tensor. Then for $x\in \mathbb{R}^n,$ $x_i(\mathcal{A}x^{m-1})_i = 0, \; \forall \; i\in [n]$ implies $x=0.$ Thus $\mathcal{A}x^{m-1}=0.$ This implies $\mathcal{A}$ is column competent tensor.
\end{proof}

\begin{remark}
The converse of the above theorem is not true in general. 
\end{remark}

Here we give an example of a column competent tensor which is not a nondegenerate tensor.

\begin{example}
Consider $\mathcal{A} \in T_{4,2}$ such that  $a_{1111}=-1, ~a_{1212}=-2, ~a_{2211}=4, ~a_{2121}=-2,~ a_{2112}=-2$ and all other entries of $\mathcal{A}$ are zeros. Then for $x=\left(
	\begin{array}{c}
	 x_1 \\
	 x_2
	 	\end{array}
\right) \in \mathbb{R}^2$ we have $\mathcal{A}x^{3}=
\left(
	\begin{array}{c}
	  -x_1(x_1^2 + 2 x_2^2) \\
	  0 
	 	\end{array}
\right) .$ Then $\mathcal{A}$ is column competent tensor. For $x=\left(
	\begin{array}{l}
	  0 \\
	  1 
	 	\end{array}
\right) $ we obtain $x_i (\mathcal{A}x^3)_i =0, ~\forall \; i \in [2] $ but $x\neq0.$ Therefore $\mathcal{A}$ is not a nondegenerate tensor.
\end{example}

\begin{lemma}\label{row diagonal lemma}
Let $\mathcal{A}\in T_{m,n}$ be row diagonal tensor of even order. $\mathcal{A}$ is column competent tensor if and only if $M(\mathcal{A})$ is column competent matrix.
\end{lemma}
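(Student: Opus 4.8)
The plan is to push the problem down to the majorization matrix by using the structural normal form for row diagonal tensors. Since $\mathcal{A}$ is row diagonal, the cited characterization gives $\mathcal{A} = M(\mathcal{A})\mathcal{I}_m$, and hence, writing $M := M(\mathcal{A})$ and denoting by $x^{[m-1]}$ the vector with $i$-th entry $x_i^{m-1}$, one has for every $x \in \mathbb{R}^n$ and every $i \in [n]$
\[
(\mathcal{A}x^{m-1})_i = \sum_{j=1}^n a_{ijj\cdots j}\, x_j^{m-1} = \big(M x^{[m-1]}\big)_i .
\]
So, setting $z := x^{[m-1]}$, the tensorial test quantity $x_i(\mathcal{A}x^{m-1})_i$ equals $x_i(Mz)_i$, and the basic identity I will exploit in both directions is $z_i(Mz)_i = x_i^{m-1}(Mz)_i = x_i^{m-2}\,\big(x_i(Mz)_i\big)$, together with the fact that $x_i^{m-2}$ vanishes if and only if $x_i$ does (here $m \ge 2$).

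For the ``if'' direction I would assume $M$ is a column competent matrix and take $x$ with $x_i(\mathcal{A}x^{m-1})_i = 0$ for all $i$; then $x_i(Mz)_i = 0$, so the identity above gives $z_i(Mz)_i = 0$ for all $i$, column competence of $M$ forces $Mz = 0$, and therefore $\mathcal{A}x^{m-1} = Mz = 0$. Note this half does not use the parity of $m$. For the ``only if'' direction I would assume $\mathcal{A}$ is column competent and take $z$ with $z_i(Mz)_i = 0$ for all $i$; because $m$ is even, $m-1$ is odd and $t \mapsto t^{m-1}$ is a bijection of $\mathbb{R}$, so there is an $x \in \mathbb{R}^n$ with $x^{[m-1]} = z$ (hence $x_i = 0 \iff z_i = 0$). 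A one-line case split ($z_i \ne 0 \Rightarrow (Mz)_i = 0$; $z_i = 0 \Rightarrow x_i = 0$) shows $x_i(\mathcal{A}x^{m-1})_i = x_i(Mz)_i = 0$ for all $i$, whence column competence of $\mathcal{A}$ yields $\mathcal{A}x^{m-1} = Mz = 0$, i.e.\ $M$ is column competent.

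There is no real obstacle here: the content is entirely in the reduction $\mathcal{A} = M(\mathcal{A})\mathcal{I}_m$ plus the elementary sign/zero bookkeeping. The one point that genuinely needs the hypothesis is the surjectivity of $t \mapsto t^{m-1}$ used in the ``only if'' direction --- for odd $m$ the image of $x \mapsto x^{[m-1]}$ is only $\mathbb{R}^n_+$, so neither that step nor the statement itself survives without the even-order assumption.
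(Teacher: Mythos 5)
Your proof is correct and follows essentially the same route as the paper's: both reduce via $\mathcal{A}=M(\mathcal{A})\mathcal{I}_m$ to the identity $\mathcal{A}x^{m-1}=M(\mathcal{A})x^{[m-1]}$ and invoke the even-order hypothesis only to invert $t\mapsto t^{m-1}$ when lifting a vector $z$ back to an $x$ with $x^{[m-1]}=z$. Your explicit case split on $z_i=0$ versus $z_i\neq 0$ is merely a slightly more careful justification of the equivalence $x_i^{m-1}(\mathcal{A}x^{m-1})_i=0\iff x_i(\mathcal{A}x^{m-1})_i=0$ that the paper asserts without comment.
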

\begin{proof}
Let $\mathcal{A}$ be row diagonal then $\mathcal{A}=M(\mathcal{A})\mathcal{I}_m$. Then $\mathcal{A} x^{m-1} = M(\mathcal{A})\mathcal{I}_m x^{m-1}$ $= M(\mathcal{A}) y $, where $x=(x_1, x_2, ..., x_n)^T$ and $y= x^{[m-1]} = (x_1^{m-1},x_2^{m-1},..., x_n^{m-1})^T$. 
Let $\mathcal{A}$ be column competent tensor, i.e. $x_i (\mathcal{A} x^{m-1})_i = 0$ $\forall \; i\in[n]$ $ \implies \mathcal{A}x^{m-1} = 0 $.
Let for some $y \in \mathbb{R}^n $ we get $y_i (M(\mathcal{A}) y)_i = 0.$ Since $m$ is even, $\exists \; x= y^{[\frac{1}{m-1}]}$ such that 
\[ y_i (M(\mathcal{A})y)_i = x_i^{m-1} (M(\mathcal{A}) \mathcal{I}_m x^{m-1})_i = x_i^{m-1} (\mathcal{A} x^{m-1})_i, \; \forall \; i\in [n].  \]
Thus $y_i (M(\mathcal{A})y)_i = x_i^{m-1} (\mathcal{A} x^{m-1})_i = 0, \; \forall \; i\in [n]$ $\iff x_i (\mathcal{A} x^{m-1})_i = 0, \; \forall \; i\in [n].$ This implies $\mathcal{A} x^{m-1}= 0,$ since $\mathcal{A}$ is column competent tensor. Then $M(\mathcal{A})y=\mathcal{A} x^{m-1} = 0 $.
Therefor $M(\mathcal{A})$ is column competent matrix.

\noindent Conversely, let $M(\mathcal{A})$ be column competent matrix. Then
\begin{align*}
     x_i^{m-1} (\mathcal{A} x^{m-1})_i & = y_i (M(\mathcal{A})y)_i  = 0, \; \forall \; i\in [n]  \\
    &\implies M(\mathcal{A})y = 0 \\
    &\implies \mathcal{A} x^{m-1} = 0.
\end{align*}
 Therefore $\mathcal{A}$ is column competent tensor.
\end{proof}

\begin{corollary}
Let $\mathcal{A}\in T_{m,n}$ be row diagonal tensor. If $M(\mathcal{A})$ is column competent matrix then $\mathcal{A}$ is column competent tensor.
\end{corollary}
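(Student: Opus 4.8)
The plan is to reuse the ``conversely'' half of the proof of Lemma~\ref{row diagonal lemma}, observing that the even-order hypothesis stated there is needed \emph{only} for the forward implication, where one extracts a real $(m-1)$-th root $x = y^{[1/(m-1)]}$ of a given $y$. The implication we want here runs in the opposite direction and never extracts roots, so no parity assumption is required.

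Concretely, I would start from an arbitrary $x \in \mathbb{R}^n$ with $x_i(\mathcal{A}x^{m-1})_i = 0$ for all $i \in [n]$, and set $y = x^{[m-1]} = (x_1^{m-1}, x_2^{m-1}, \dots, x_n^{m-1})^T$. Since $\mathcal{A}$ is row diagonal, the characterization $\mathcal{A} = M(\mathcal{A})\mathcal{I}_m$ gives $\mathcal{A}x^{m-1} = M(\mathcal{A})\mathcal{I}_m x^{m-1} = M(\mathcal{A}) y$. The key step is then the elementary identity, valid for every $i \in [n]$ and every $m \geq 2$,
\[
y_i\bigl(M(\mathcal{A})y\bigr)_i = x_i^{m-1}\bigl(\mathcal{A}x^{m-1}\bigr)_i = x_i^{m-2}\Bigl(x_i\bigl(\mathcal{A}x^{m-1}\bigr)_i\Bigr) = 0,
\]
so that $y_i(M(\mathcal{A})y)_i = 0$ for all $i$. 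Applying the column competence of the matrix $M(\mathcal{A})$ forces $M(\mathcal{A}) y = 0$, i.e.\ $\mathcal{A}x^{m-1} = 0$, which is exactly the defining condition for $\mathcal{A}$ to be a column competent tensor.

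I do not expect a genuine obstacle here; the statement is essentially a one-directional specialization of Lemma~\ref{row diagonal lemma}. The only point to check carefully is that the factorization $x_i^{m-1} = x_i^{m-2}\cdot x_i$ is legitimate for all $m \geq 2$ (when $m = 2$ the factor $x_i^{m-2}$ is the empty product $1$ and $y = x$), which is precisely why the even-order hypothesis of Lemma~\ref{row diagonal lemma} can be dropped in this direction.
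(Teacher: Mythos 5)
Your proof is correct and is essentially the argument the paper intends: the corollary is stated without proof, relying on the ``conversely'' half of the row-diagonal lemma, which is exactly what you carry out, and you rightly note that the even-order hypothesis is only needed for the forward direction (where roots are extracted). Your explicit justification that $x_i(\mathcal{A}x^{m-1})_i=0$ forces $x_i^{m-1}(\mathcal{A}x^{m-1})_i=0$ is a small step the paper's lemma glosses over, but it is the same route.
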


\subsection{Results related to auxiliary matrix}
Let $\mathcal{A}\in T_{m,n}$ and $N=\binom{m+n-2}{n-1}.$ Given a vector $x = (x_1,x_2,...,x_n)^T\in \mathbb{R}^n,$ we construct a vector $y=(y_1,...,y_n,y_{n+1},...,y_N)\in \mathbb{R}^N$ such that $\mathcal{A}x^{m-1} =A y,$ $A\in \mathbb{R}^{n\times N}.$ Here each $y_i$ is assigned to corresponding monomials in $x_1,x_2,...,x_n$ of degree $(m-1)$ by modified graded lexicographic order. The auxiliary matrix of $\mathcal{A}$ is denoted by $\Bar{A}=\left(\begin{array}{c}
     A  \\
     O
\end{array} \right)$, where $O$ is a null matrix of order $(N-n) \times N $. For $q\in \mathbb{R}^n$ construct $\Bar{q} \in \mathbb{R}^N$ as $\Bar{q}_i = \left\{
\begin{array}{ll}
	  q_i \; ; & \forall \; i\in [n] \\
	  0 \; ;  & \forall \; i\in [N]\backslash [n]
	   \end{array}
 \right.$
 
\NI i.e. $\Bar{q}=\left(\begin{array}{c}
     q  \\
     0
\end{array} \right),$ where $0$ is a zero vector of order $(N-n)$.

\noindent Then the LCP$(\Bar{q},\Bar{A})$ is to find $ y,\; w \in \mathbb{R}^N $ such that,
\begin{equation}\label{auxiliary LCP}
    y \geq 0, \;\; w = \Bar{A}y + \Bar{q} \geq 0, \;\; y^T w =0.
\end{equation}
LCP$(\Bar{q},\Bar{A})$ is said to be the auxiliary LCP to the TCP$(q,\mathcal{A})$. For details see \cite{dutta2022some}.

\begin{theorem}\label{solution of TCP implies solution of LCP} \cite{dutta2022some}
Let $\mathcal{A}\in T_{m,n}$ and for some $q\in \mathbb{R}^n,$ LCP$(\Bar{q},\Bar{A})$ be the corresponding auxiliary LCP defined by (\ref{auxiliary LCP}) and $x=(x_1, x_2, ..., x_n)^T\in$ SOL$(q,\mathcal{A}).$ Then $y=(y_1, y_2, ..., y_n, y_{n+1},y_{n+2},..., y_N )^T $ where each $y_i$ is associated with the monomials of degree $(m-1)$ in  $x_1, x_2,... x_n$, by modified graded lexicographic order is a solution of LCP$(\Bar{q},\Bar{A})$. i.e. $x \in $ SOL$(q,\mathcal{A}) \implies y\in$ SOL$(\Bar{q},\Bar{A}).$
\end{theorem}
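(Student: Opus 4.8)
The plan is to verify directly that the pair $(w,y)$ with $w=\Bar{A}y+\Bar{q}$ satisfies the three defining conditions of a solution of LCP$(\Bar{q},\Bar{A})$ given in (\ref{auxiliary LCP}), namely $y\geq 0$, $w\geq 0$ and $y^T w=0$. The whole argument rests on the block structure $\Bar{A}=\left(\begin{array}{c} A \\ O\end{array}\right)$, $\Bar{q}=\left(\begin{array}{c} q \\ 0\end{array}\right)$ and on the identity $Ay=\mathcal{A}x^{m-1}$ built into the construction of the auxiliary matrix.

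First I would record the bookkeeping from the construction preceding the statement: the coordinates of $y$ are precisely the values at $x$ of the monomials of degree $m-1$ in $x_1,\dots,x_n$, listed in the modified graded lexicographic order, so that the first $n$ coordinates are the pure powers, $y_i=x_i^{m-1}$ for $i\in[n]$, and $Ay=\mathcal{A}x^{m-1}$. Nonnegativity of $y$ is then immediate: since $x\in\mathrm{SOL}(q,\mathcal{A})$ we have $x\geq 0$, hence every monomial of degree $m-1$ evaluated at $x$ is nonnegative, i.e. $y\geq 0$. For $w$, the block form gives $w=\Bar{A}y+\Bar{q}=\left(\begin{array}{c} Ay+q \\ 0\end{array}\right)=\left(\begin{array}{c} \mathcal{A}x^{m-1}+q \\ 0\end{array}\right)$; the upper block is nonnegative because $x$ is feasible for TCP$(q,\mathcal{A})$, and the lower block is the zero vector, so $w\geq 0$.

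It remains to check complementarity. Since $w_i=0$ for every $i\in[N]\backslash[n]$, we obtain $y^T w=\sum_{i=1}^{n} y_i w_i=\sum_{i=1}^{n} x_i^{m-1}(\mathcal{A}x^{m-1}+q)_i$. From $x^T(\mathcal{A}x^{m-1}+q)=0$ together with $x\geq 0$ and $\mathcal{A}x^{m-1}+q\geq 0$, each summand $x_i(\mathcal{A}x^{m-1}+q)_i$ is nonnegative and their sum vanishes, so $x_i(\mathcal{A}x^{m-1}+q)_i=0$ for every $i\in[n]$. Multiplying by $x_i^{m-2}\geq 0$ (legitimate since $m\geq 2$, with $x_i^{0}=1$ when $m=2$) gives $x_i^{m-1}(\mathcal{A}x^{m-1}+q)_i=0$ for all $i$, hence $y^T w=0$. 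This completes the verification that $y\in\mathrm{SOL}(\Bar{q},\Bar{A})$.

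The only genuinely delicate point is bookkeeping rather than mathematics: one must be certain that the modified graded lexicographic order indexing $y$ really places the pure powers $x_1^{m-1},\dots,x_n^{m-1}$ in positions $1,\dots,n$, matched with the rows carrying $q_1,\dots,q_n$ in $\Bar{q}$; otherwise the reduction of $y^T w$ to its first $n$ terms and the subsequent pairing of $y_i$ with $(\mathcal{A}x^{m-1}+q)_i$ would fail. I would handle this by invoking the precise definition of the auxiliary matrix and vector recalled just above the theorem, or by citing \cite{dutta2022some}; everything else is the short direct computation sketched here.
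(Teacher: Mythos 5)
Your verification is correct. Note first that the paper itself offers no proof of this statement: it is imported verbatim from \cite{dutta2022some} and stated without argument, so there is no in-paper proof to compare against. Your direct check of the three LCP conditions is the natural argument and each step is sound: $y\geq 0$ because monomials of degree $m-1$ in the nonnegative entries of $x$ are nonnegative; $w=\Bar{A}y+\Bar{q}$ has upper block $\mathcal{A}x^{m-1}+q\geq 0$ and zero lower block by the block structure of $\Bar{A}$ and $\Bar{q}$; and $y^Tw=\sum_{i=1}^{n}x_i^{m-1}(\mathcal{A}x^{m-1}+q)_i=0$ because componentwise complementarity of $x$ (forced by $x\geq 0$, $\mathcal{A}x^{m-1}+q\geq 0$, $x^T(\mathcal{A}x^{m-1}+q)=0$) survives multiplication by $x_i^{m-2}\geq 0$. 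You are also right to flag the one genuinely load-bearing convention, namely that the modified graded lexicographic order places the pure powers $x_1^{m-1},\dots,x_n^{m-1}$ in the first $n$ positions of $y$; this is indeed how the construction is meant (it is what makes Lemma \ref{important lemma}, with $x_i=y_i^{1/(m-1)}$ for $i\in[n]$, meaningful), and invoking the definition or \cite{dutta2022some} for it is the right way to discharge that point.
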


\begin{lemma}\label{important lemma} \cite{dutta2022some}
Let $\mathcal{A}\in T_{m,n}$ and $\Bar{A}$ be the auxiliary matrix of $\mathcal{A}$ such that $\Bar{A} =$ $ \left( \begin{array}{cc}
    M(\mathcal{A}) & O \\
    O & O
\end{array} \right).$ Then for all $y\in$ SOL$(\Bar{q},\Bar{A})$ there exists $x \in \mathbb{R}^n$ with $x_i=y_i^{\frac{1}{m-1}}, \; \forall \; i\in [n] $ such that $x\in$ SOL$(q,\mathcal{A})$.
\end{lemma}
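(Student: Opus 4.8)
The plan is to recover a TCP solution by taking $(m-1)$-th roots of the first $n$ coordinates of the given LCP solution and then checking the three defining conditions of SOL$(q,\mathcal{A})$ one by one. Let $y\in\,$SOL$(\Bar{q},\Bar{A})$, so that $y\geq 0$, $w:=\Bar{A}y+\Bar{q}\geq 0$ and $y^T w=0$. Since $y\geq 0$ we have $y_i\geq 0$ for each $i$, hence $x_i:=y_i^{\frac{1}{m-1}}$ is a well-defined nonnegative real for $i\in[n]$ and $x\geq 0$. The first real step is the identity $\mathcal{A}x^{m-1}=M(\mathcal{A})x^{[m-1]}$: the defining relation of the auxiliary matrix gives $\mathcal{A}x^{m-1}=A\tilde{y}$ with $\tilde{y}$ the degree-$(m-1)$ monomial vector of $x$, whose first $n$ entries, in the modified graded lexicographic order, are $x_1^{m-1},\dots,x_n^{m-1}$; the hypothesis $\Bar{A}=\left(\begin{array}{cc} M(\mathcal{A}) & O \\ O & O\end{array}\right)$ forces $A=\big(M(\mathcal{A})\ \ O\big)$, so all mixed monomials drop out and $\mathcal{A}x^{m-1}=M(\mathcal{A})(x_1^{m-1},\dots,x_n^{m-1})^T$. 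Because $x_i^{m-1}=y_i$ for $i\in[n]$, this equals $M(\mathcal{A})(y_1,\dots,y_n)^T$, i.e. exactly the first $n$ coordinates of $\Bar{A}y$.

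Feasibility then follows immediately: from the block shapes $\Bar{A}y=\left(\begin{array}{c} M(\mathcal{A})(y_1,\dots,y_n)^T \\ 0\end{array}\right)$ and $\Bar{q}=\left(\begin{array}{c} q \\ 0\end{array}\right)$, the vector $w=\Bar{A}y+\Bar{q}$ has first $n$ coordinates equal to $\mathcal{A}x^{m-1}+q$ and all remaining coordinates equal to $0$. Since $w\geq 0$ we get $\mathcal{A}x^{m-1}+q\geq 0$; together with $x\geq 0$ this gives $x\in\,$FEA$(q,\mathcal{A})$.

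For complementarity I would use $y^T w=0$ together with $y\geq 0$ and $w\geq 0$, which forces $y_i w_i=0$ for every index, in particular for $i\in[n]$. Fix $i\in[n]$: if $y_i=0$ then $x_i=0$, hence $x_i(\mathcal{A}x^{m-1}+q)_i=0$; if $y_i\neq 0$ then $w_i=0$, i.e. $(\mathcal{A}x^{m-1}+q)_i=0$, and again $x_i(\mathcal{A}x^{m-1}+q)_i=0$. Summing over $i\in[n]$ yields $x^T(\mathcal{A}x^{m-1}+q)=0$, so $x\in\,$SOL$(q,\mathcal{A})$, as required. The only genuinely delicate point is the bookkeeping behind $\mathcal{A}x^{m-1}=M(\mathcal{A})x^{[m-1]}$: one has to be sure that in the modified graded lexicographic indexing the coordinates $y_1,\dots,y_n$ are precisely the pure powers $x_1^{m-1},\dots,x_n^{m-1}$, so that the assumed block form of $\Bar{A}$ really annihilates every mixed-monomial contribution (the same computation as in the proof of Lemma~\ref{row diagonal lemma}). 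Once this is granted, the remaining verifications are the short ones above, and I expect no further obstacle.
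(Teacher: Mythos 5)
Your proof is correct. Note that the paper itself gives no proof of this lemma --- it is quoted from \cite{dutta2022some} --- so there is nothing internal to compare against; your argument (take nonnegative $(m-1)$-th roots, use the block form $\Bar{A}=\left(\begin{array}{cc} M(\mathcal{A}) & O\\ O & O\end{array}\right)$ to reduce $\mathcal{A}x^{m-1}$ to $M(\mathcal{A})x^{[m-1]}=M(\mathcal{A})(y_1,\dots,y_n)^T$, then check feasibility and componentwise complementarity) is the natural one and is complete, including the essential bookkeeping point that the first $n$ positions in the modified graded lexicographic order are the pure powers $x_j^{m-1}$, which is exactly what makes $M(\mathcal{A})$ the leading block of $A$.
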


\noindent Let $\Bar{A}$ be the auxiliary matrix of $\mathcal{A}$ and LCP$(\Bar{q},\Bar{A})$ be the auxiliary linear complementarity problem corresponding to the tensor complementarity problem TCP$(q,\mathcal{A}).$
% Then $\Bar{A}\in \mathbb{R}^N \times N,$ where $N=\binom{m+n-2}{n-1}.$ 
Now we establish a connection between column competent matrix and column competent tensor.

\begin{theorem}\label{competent matrix implies competent tensor}
Let $\mathcal{A}\in T_{m,n}.$ If $\Bar{A}$ is column competent matrix then $\mathcal{A}$ is column competent tensor.
\end{theorem}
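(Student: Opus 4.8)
The plan is to transfer the column competence condition from the auxiliary matrix $\Bar{A}$ down to the tensor $\mathcal{A}$ through the linearization $x\mapsto y$ that defines the auxiliary LCP. First I would fix an arbitrary $x=(x_1,\dots,x_n)^T\in\mathbb{R}^n$ satisfying $x_i(\mathcal{A}x^{m-1})_i=0$ for every $i\in[n]$, and build the associated vector $y=(y_1,\dots,y_n,y_{n+1},\dots,y_N)^T\in\mathbb{R}^N$ by assigning each $y_j$ to the corresponding degree-$(m-1)$ monomial in $x_1,\dots,x_n$ under the modified graded lexicographic order, so that $\mathcal{A}x^{m-1}=Ay$. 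By the convention fixing that order (the first $n$ coordinates corresponding to the pure powers, which is exactly what makes the leading $n$ columns of $A$ equal to $M(\mathcal{A})$, cf.\ Lemma \ref{important lemma}), we have $y_i=x_i^{m-1}$ for $i\in[n]$.

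Next I would check that $y_j(\Bar{A}y)_j=0$ for all $j\in[N]$. For $j\in[N]\setminus[n]$ this is immediate, since the last $N-n$ rows of $\Bar{A}$ are zero and hence $(\Bar{A}y)_j=0$. For $j\in[n]$ we have $(\Bar{A}y)_j=(Ay)_j=(\mathcal{A}x^{m-1})_j$ and $y_j=x_j^{m-1}$, so $y_j(\Bar{A}y)_j=x_j^{m-1}(\mathcal{A}x^{m-1})_j$; from $x_j(\mathcal{A}x^{m-1})_j=0$ either $x_j=0$, and then $x_j^{m-1}=0$, or $x_j\neq 0$, and then $(\mathcal{A}x^{m-1})_j=0$, so in either case $y_j(\Bar{A}y)_j=0$. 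Since $\Bar{A}$ is a column competent matrix, $y_j(\Bar{A}y)_j=0$ for all $j$ forces $\Bar{A}y=0$; in particular $(\mathcal{A}x^{m-1})_j=(\Bar{A}y)_j=0$ for all $j\in[n]$, i.e.\ $\mathcal{A}x^{m-1}=0$. As $x$ was an arbitrary vector satisfying the hypothesis, $\mathcal{A}$ is a column competent tensor.

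I do not expect a genuine obstacle here; the argument is essentially bookkeeping through the substitution $x\mapsto y$. The only points requiring a little care are the identification $y_i=x_i^{m-1}$ for $i\in[n]$ under the chosen monomial ordering --- needed so that the vanishing of $x_j(\mathcal{A}x^{m-1})_j$ propagates to the vanishing of $y_j(\Bar{A}y)_j$ --- and the separate treatment of the degenerate coordinates with $x_j=0$; both are routine.
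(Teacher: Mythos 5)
Your proposal is correct and follows essentially the same route as the paper's proof: construct $y$ from $x$ via the monomial substitution with $y_i=x_i^{m-1}$ for $i\in[n]$, verify $y_j(\Bar{A}y)_j=0$ for all $j\in[N]$ (trivially for $j>n$, and via $x_j(\mathcal{A}x^{m-1})_j=0\implies x_j^{m-1}(\mathcal{A}x^{m-1})_j=0$ for $j\le n$), then invoke column competence of $\Bar{A}$ to get $\Bar{A}y=0$ and hence $\mathcal{A}x^{m-1}=0$. Your explicit case split on $x_j=0$ versus $x_j\neq 0$ just makes precise a step the paper states without comment.
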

\begin{proof}
Let $\Bar{A}$ be a column competent matrix. Then for $y\in \mathbb{R}^N$, $y_i (\Bar{A} y)_i = 0, \; \forall \; i\in [N] ~\implies ~\Bar{A} y =0$. Let for some $x\in \mathbb{R}^n,\; x_i(\mathcal{A}x^{m-1})_i = 0, \; \forall \; i\in [n] \implies x_i^{m-1}(\mathcal{A}x^{m-1})_i = 0, \; \forall \; i\in [n] .$ Since $\Bar{A}$ is the auxiliary matrix of $\mathcal{A}$, for all $x\in \mathbb{R}^n$ we construct $y\in \mathbb{R}^N$ such that 
$(\mathcal{A}x^{m-1})_i=(\Bar{A} y)_i$ and $y_i (\Bar{A} y)_i = x_i^{m-1}(\mathcal{A}x^{m-1})_i, \; \forall \; i \in [n].$ Therefore $ x_i^{m-1}(\mathcal{A}x^{m-1})_i = 0 \iff y_i (\Bar{A} y)_i = 0, \; \forall \; i \in [n]$ and by the construction of $\Bar{A}$ we have $y_i (\Bar{A} y)_i= 0, \; \forall \; i \in [N] \backslash [n]$. Therefore we obtain $y_i (\Bar{A} y)_i = 0, \; \forall \; i \in [N] \implies \Bar{A} y=0$, since $\Bar{A}$ is column competent matrix. Now $\Bar{A} y=0 \implies (\Bar{A} y)_i=0, \; \forall \; i \in [N] \implies (\mathcal{A}x^{m-1})_i=0, \; \forall \; i \in [n] $. Thus $  x_i(\mathcal{A}x^{m-1})_i = 0, \; \forall \; i\in [n] \implies \mathcal{A}x^{m-1}=0 $. Hence $\mathcal{A}$ is a column competent tensor.
\end{proof}

\begin{remark}
Converse of the above theorem does not hold in general.
\end{remark}
 We illustrate the phenomenon with the help of an example.
\begin{example}
Consider $\mathcal{A}\in T_{3,2}$ given in Example \ref{first example}. It is a column competent tensor and the auxiliary matrix of $\mathcal{A}$ is $\Bar{A}=\left( \begin{array}{ccc}
     0 & 1 & 1 \\
     0 & 1 & 1 \\
     0 & 0 & 0
\end{array}\right).$ For $y=\left( \begin{array}{c}
      0 \\
      0 \\
      1
\end{array}\right)$ we have $y_i(\Bar{A}y)_i = 0, \; \forall \; i\in [3]$ but $\Bar{A}y=\left( \begin{array}{c}
      1 \\
      1 \\
      0
\end{array}\right).$ Therefore $\Bar{A}$ is not a column competent matrix.
\end{example}

Now we define local uniqueness of $\omega$-solutions of TCP$(q, \mathcal{A})$.
\begin{definition}
 A $\omega$-solution, $\Tilde{\omega},$ of the TCP$(q, \mathcal{A})$ is said to be locally $\omega$-unique if there exists a neighborhood of $\Tilde{\omega}$ within which $\Tilde{\omega}$ is the only $\omega$-solution.
\end{definition}

\begin{theorem}\label{finiteness of tcp by lcp}
Let LCP$(\Bar{q},\Bar{A})$ be the auxiliary LCP of  TCP$(q,\mathcal{A})$. If LCP$(\Bar{q},\Bar{A})$ has finite number (possibly zero) of $w$-solutions for all $\Bar{q} \in \mathbb{R}^N$ then TCP$(q,\mathcal{A})$ has finite number (possibly zero) of $\omega$-solutions for all $q \in \mathbb{R}^n$.
\end{theorem}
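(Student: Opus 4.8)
The plan is to produce a one-to-one correspondence between the $\omega$-solutions of TCP$(q,\mathcal{A})$ and certain $w$-solutions of the auxiliary problem LCP$(\Bar{q},\Bar{A})$, and then simply transfer the finiteness hypothesis across this correspondence. Concretely, I would show that each $\omega$-solution $\Tilde{\omega}$ of TCP$(q,\mathcal{A})$ forces $\left(\begin{array}{c}\Tilde{\omega}\\0\end{array}\right)\in\mathbb{R}^N$ to be a $w$-solution of LCP$(\Bar{q},\Bar{A})$, so that distinct $\omega$-solutions yield distinct $w$-solutions.

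First I would fix an arbitrary $q\in\mathbb{R}^n$ and form $\Bar{q}\in\mathbb{R}^N$ as in the construction preceding (\ref{auxiliary LCP}). Applying the hypothesis to this particular $\Bar{q}$, the problem LCP$(\Bar{q},\Bar{A})$ has only finitely many $w$-solutions; denote their number by $k$ (possibly $k=0$). Next, let $\Tilde{\omega}$ be any $\omega$-solution of TCP$(q,\mathcal{A})$ and pick $x=(x_1,\dots,x_n)^T\in$ SOL$(q,\mathcal{A})$ with $\mathcal{A}x^{m-1}+q=\Tilde{\omega}$. By Theorem \ref{solution of TCP implies solution of LCP}, the vector $y=(y_1,\dots,y_N)^T$ whose entries are the degree-$(m-1)$ monomials in $x_1,\dots,x_n$ ordered by modified graded lexicographic order satisfies $y\in$ SOL$(\Bar{q},\Bar{A})$. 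Setting $w:=\Bar{A}y+\Bar{q}$ and using the block form $\Bar{A}=\left(\begin{array}{c}A\\O\end{array}\right)$ together with $\mathcal{A}x^{m-1}=Ay$, $\Bar{q}_i=q_i$ for $i\in[n]$, and $\Bar{q}_i=0$ otherwise, I get $w_i=(\mathcal{A}x^{m-1})_i+q_i=\Tilde{\omega}_i$ for $i\in[n]$ and $w_i=0$ for $i\in[N]\setminus[n]$, i.e. $w=\left(\begin{array}{c}\Tilde{\omega}\\0\end{array}\right)$. Hence $w$ is a $w$-solution of LCP$(\Bar{q},\Bar{A})$ completely determined by $\Tilde{\omega}$.

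Finally, the map $\Tilde{\omega}\mapsto\left(\begin{array}{c}\Tilde{\omega}\\0\end{array}\right)$ is injective on the set of $\omega$-solutions of TCP$(q,\mathcal{A})$, since two distinct $\omega$-solutions must already differ in their first $n$ coordinates. Therefore the number of $\omega$-solutions of TCP$(q,\mathcal{A})$ is at most $k<\infty$, and since $q$ was arbitrary, the conclusion follows (the ``possibly zero'' case being $k=0$). The only point requiring care is the bookkeeping in the middle paragraph—checking that the last $N-n$ coordinates of $\Bar{A}y+\Bar{q}$ vanish while the first $n$ reproduce $\Tilde{\omega}$—but this is immediate from the block structure of $\Bar{A}$ and $\Bar{q}$, so I do not anticipate a genuine obstacle; in particular Lemma \ref{important lemma} is not needed for this direction.
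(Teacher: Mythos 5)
Your argument is correct and is essentially the paper's proof in contrapositive-free form: the paper assumes infinitely many $\omega$-solutions, builds the same correspondence $\Tilde{\omega}\mapsto\left(\begin{array}{c}\Tilde{\omega}\\0\end{array}\right)$ via Theorem \ref{solution of TCP implies solution of LCP} and the block structure of $\Bar{A}$ and $\Bar{q}$, and derives a contradiction, whereas you state the injection directly. You are also right that Lemma \ref{important lemma} is not needed here; the paper does not use it in this proof either.
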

\begin{proof}
Consider SOL$(\Bar{q},\Bar{A})$ the solution set of LCP$(\Bar{q},\Bar{A})$. Then SOL$(\Bar{q},\Bar{A}) = \{ y\in \mathbb{R}^N :\; y\geq 0, \; w= \Bar{A}y + \Bar{q} \geq 0, ~~ y^T w =0  \}$. If LCP$(\Bar{q},\Bar{A})$ has finite number (possibly zero) of $w$-solutions, then $\{ w= \Bar{A}y + \Bar{q} :y\in$ SOL$(\Bar{q},\Bar{A})\} $ is a finite set. Assume that TCP$(q,\mathcal{A})$ has infinitely many $\omega$-solutions for $q\in \mathbb{R}^n$. Then $\exists$ a sequence $\{ \omega^k \}$ such that each $\omega^k$ is a $\omega$-solution of TCP$(q,\mathcal{A})$, $\forall \; k \in \mathbb{N}$, with $\omega^r \neq \omega^s$, $\forall \; r \neq s, \; r,s \in \mathbb{N}$. Then $\exists$ a sequence of vectors $\{ x^k \}$ such that $x^k \in$ SOL$(q,\mathcal{A}) $ and $ \omega^k = \mathcal{A}x^{k^{m-1}} +q, \; \forall \; k \in \mathbb{N} $. Therefore $ (\omega^k)_i = (\mathcal{A}x^{k^{m-1}} +q)_i, \; \forall \; i \in [n], \; \forall \; k \in \mathbb{N}$.
For each $x^k \in$ SOL$(q,\mathcal{A}) $ we construct $y^k \in \mathbb{R}^N $ such that $y^k \in$ SOL$(\Bar{q},\Bar{A})$ by Theorem \ref{solution of TCP implies solution of LCP}. Then
\begin{align*}
    w^k_i & = \left\{ \begin{array}{ll}
         	 (\Bar{A}y^k + \Bar{q})_i   & ; \; \forall \; i\in [n] \\
	         0    & ; \; \forall \; i\in [N]\backslash [n]
	         \end{array} \right.\\
	      & = \left\{ \begin{array}{ll}
	         (\mathcal{A}x^{k^{m-1}})_i + q_i   & ; \; \forall \; i\in [n] \\
	         0    & ; \; \forall \; i\in [N]\backslash [n]
	         \end{array} \right.\\
	      & =  \left\{ \begin{array}{ll}
	         \omega^k_i    & ; \; \forall \; i\in [n] \\
         	 0    & ; \; \forall \; i\in [N]\backslash [n]
	         \end{array} \right. .
\end{align*}
This implies that LCP$(\Bar{q},\Bar{A})$ has infinitely many $w$-solutions which is a contradiction. Hence TCP$(q,\mathcal{A})$ has finite number (possibly zero) of $\omega$-solutions for $q\in \mathbb{R}^n$.
\end{proof}

Consider the majorization matrix $M(\mathcal{A})$ of $\mathcal{A}$. It is easy to note that $A= (M(\mathcal{A}) \; B)$ for some $B\in \mathbb{R}^{(N-n)\times (N-n)}$ and then $\Bar{A} =$ $ \left( \begin{array}{cc}
    M(\mathcal{A}) & B \\
    O & O
\end{array} \right)$.
Now we establish the condition for tensor $\mathcal{A}$ under which $\Bar{A}$ is always column competent matrix.

\begin{theorem}\label{condition for competent of block matrix }
Suppose $ \mathcal{A} \in T_{m,n}$ and $\Bar{A} =$ $ \left( \begin{array}{cc}
    M(\mathcal{A}) & B \\
    O & O
\end{array} \right).$ $\Bar{A}$ is column competent matrix if and only if the following two conditions hold:\\
(a) $M(\mathcal{A})$ is column competent matrix\\
(b) $B= O$.
\end{theorem}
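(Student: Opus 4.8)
The plan is to reduce everything to the block structure of $\bar{A}$. Write each $z\in\mathbb{R}^N$ as $z=\left(\begin{array}{c} u\\ v\end{array}\right)$ with $u\in\mathbb{R}^n$ and $v\in\mathbb{R}^{N-n}$, so that $\bar{A}z=\left(\begin{array}{c} M(\mathcal{A})u+Bv\\ 0\end{array}\right)$. The key (and trivial) observation is that $(\bar{A}z)_i=0$ for every $i\in[N]\setminus[n]$, so the condition $z_i(\bar{A}z)_i=0$ is automatically satisfied on the last $N-n$ coordinates; all the content sits in the first $n$ coordinates.

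For the ``if'' part I would assume (a) and (b). Then $B=O$ and $\bar{A}z=\left(\begin{array}{c} M(\mathcal{A})u\\ 0\end{array}\right)$. If $z_i(\bar{A}z)_i=0$ for all $i\in[N]$, then in particular $u_i(M(\mathcal{A})u)_i=0$ for all $i\in[n]$; column competence of $M(\mathcal{A})$ gives $M(\mathcal{A})u=0$, hence $\bar{A}z=0$, so $\bar{A}$ is column competent matrix.

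For the ``only if'' part I would assume $\bar{A}$ is column competent matrix and prove (a) and (b) by choosing convenient test vectors. For (a): given $u\in\mathbb{R}^n$ with $u_i(M(\mathcal{A})u)_i=0$ for all $i\in[n]$, apply column competence of $\bar{A}$ to $z=\left(\begin{array}{c} u\\ 0\end{array}\right)$; here $\bar{A}z=\left(\begin{array}{c} M(\mathcal{A})u\\ 0\end{array}\right)$, so $z_i(\bar{A}z)_i=0$ for all $i\in[N]$, forcing $\bar{A}z=0$ and thus $M(\mathcal{A})u=0$. For (b): given an arbitrary $v\in\mathbb{R}^{N-n}$, apply column competence of $\bar{A}$ to $z=\left(\begin{array}{c} 0\\ v\end{array}\right)$; now $\bar{A}z=\left(\begin{array}{c} Bv\\ 0\end{array}\right)$, and since the first $n$ entries of $z$ and the last $N-n$ entries of $\bar{A}z$ all vanish, we again get $z_i(\bar{A}z)_i=0$ for all $i\in[N]$, hence $\bar{A}z=0$, i.e. $Bv=0$; as $v$ was arbitrary, $B=O$.

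There is no real analytic obstacle; the one thing to watch is the bookkeeping of indices and the padding zeros, i.e. being consistent that the condition $z_i(\bar{A}z)_i=0$ ranges over all of $[N]$ and that on $[N]\setminus[n]$ it holds vacuously because those rows of $\bar{A}$ vanish. I expect the only subtlety worth spelling out in the write-up is exactly this observation, which is what legitimizes both ``reduction to the first $n$ coordinates'' arguments.
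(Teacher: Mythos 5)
Your proposal is correct and follows the same block-decomposition strategy as the paper: reduce $z_i(\Bar{A}z)_i=0$ to the first $n$ coordinates (the last $N-n$ rows of $\Bar{A}$ vanish, so the condition is vacuous there) and plug in block test vectors. The differences are in the details, and they are worth noting. For (b), the paper tests only the single vector $y=\left(\begin{array}{c}0\\ e\end{array}\right)$ with $e$ the all-ones vector, deriving the contradiction $\Bar{A}y=\left(\begin{array}{c}Be\\ 0\end{array}\right)\neq 0$; this only rules out $Be\neq 0$, and would fail to produce a contradiction for a nonzero $B$ whose rows sum to zero. Your use of an arbitrary $v\in\mathbb{R}^{N-n}$, concluding $Bv=0$ for all $v$ and hence $B=O$, closes that gap. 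For (a), the paper first establishes $B=O$ and then invokes the inheritance of column competence by principal submatrices (Proposition 2.3 of Xu), whereas you give a direct, self-contained test with $z=\left(\begin{array}{c}u\\ 0\end{array}\right)$, which moreover works without first knowing $B=O$ since the $v$-block of your test vector is zero. The forward (``if'') direction is identical in both. In short: same skeleton, but your version is tighter on part (b) and more elementary on part (a).
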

\begin{proof}
Let $\Bar{A}$ be column competent matrix. Then for $y \in \mathbb{R}^N,$
\[ y_i (\Bar{A} y)_i = 0, \; \forall \; i \in [N] \implies (\Bar{A} y)_i =0, \; \forall \; i \in [N].\]
If $B \neq O$ we consider the vector $y \in \mathbb{R}^N$ such that $y= \left( \begin{array}{c}
     0_{n,1}\\
     e_{(N-n),1}
\end{array} \right)$ where $e_{(N-n),1} = (1, \cdots, 1)^T .$ Now
$y_i (\Bar{A} y)_i =0, \; \forall \; i \in [N] .$ However

\begin{center}
    $\Bar{A} y= \left( \begin{array}{cc}
                 M(\mathcal{A})_{n,n} & B_{N-n,N-n} \\
                 O_{n,n} & O_{N-n,N-n}
                  \end{array} \right) $
$\left( \begin{array}{c}
     0_{n,1}\\
     e_{N-n,1}
\end{array} \right)$ = 
$\left( \begin{array}{c}
     (Be)_{n,1}\\
    O_{N-n,1}
\end{array} \right) \neq O_{N,1}$.
\end{center}
This contradicts the fact that $\Bar{A} $ is column competent matrix. Therefore $B= O$. 
Now $B=O$ implies $\Bar{A} =$ $ \left( \begin{array}{cc}
    M(\mathcal{A}) & O \\
    O & O
\end{array} \right) .$ Then $M(\mathcal{A})$ is a nonempty principal submatrix of $\Bar{A}.$ Then by Proposition 2.3 of \cite{xu1999local} we conclude that $M(\mathcal{A})$ is column competent matrix.

Conversely, let $M(\mathcal{A})$ be column competent matrix and $B= O.$ For $y \in \mathbb{R}^N$, we define $z\in \mathbb{R}^n$ such that $z_i=y_i, \; \forall \; i \in [n]$. Then  $(\Bar{A} y)_i = \left\{
\begin{array}{ll}
	 (M(\mathcal{A}) z)_i &;\; \forall \; i\in [n] \\
	  0  &; \; \forall \; i\in [N]\backslash[n]
	   \end{array}
 \right. .$ This follows \begin{align*}
   y_i (\Bar{A} y)_i = 0, \; \forall \; i \in [N] &  \implies z_i (M(\mathcal{A}) z)_i = 0, \; \forall \; i \in [n]   \\
     & \implies M(\mathcal{A}) z= 0, \mbox{ since } M(\mathcal{A}) \mbox{ is column competent matrix}.
\end{align*}
Therefore for $y \in \mathbb{R}^N, \; y_i (\Bar{A} y)_i = 0, \; \forall \; i \in [N] \implies \Bar{A} y=0.$ Hence $\Bar{A}$ is column competent matrix.
\end{proof}

Now we establish some equivalent conditions for a tensor complementarity problem with finite number of $\omega$-solution.

\begin{theorem}
Let $\mathcal{A}\in T_{m,n}$ be even order row diagonal. Then the following are equivalent:\\
(a) $M(\mathcal{A})$ is a column competent matrix\\
(b) $\mathcal{A}$ is a  column competent tensor\\
(c) For $q \in \mathbb{R}^n$ the TCP$(q,\mathcal{A})$ has a finite number (possibly zero) of $\omega$-solutions.
    % For all vector $q$, any $w$-solution of the TCP$(q,\mathcal{A})$ if exists, must be locally $w$-unique.
\end{theorem}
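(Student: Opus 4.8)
The plan is to prove the chain $(a) \Rightarrow (b) \Rightarrow (c) \Rightarrow (a)$, leaning on the machinery already built for even-order row diagonal tensors. The implication $(a) \Rightarrow (b)$ is immediate from Corollary 3.x (the corollary following Lemma~\ref{row diagonal lemma}): if $M(\mathcal{A})$ is column competent and $\mathcal{A}$ is row diagonal, then $\mathcal{A}$ is column competent. Actually, since $\mathcal{A}$ is of even order, Lemma~\ref{row diagonal lemma} gives the full equivalence $(a) \Leftrightarrow (b)$ directly, so both directions between (a) and (b) come for free; I would cite this once and move on.

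For $(b) \Rightarrow (c)$, the route is through the auxiliary LCP. Since $\mathcal{A}$ is row diagonal, $A = (M(\mathcal{A})\; B)$ where in fact $B = O$, because row diagonality forces $\mathcal{A} = M(\mathcal{A})\mathcal{I}_m$ and hence every monomial appearing in $\mathcal{A}x^{m-1}$ is a pure power $x_i^{m-1}$; thus $\bar{A} = \left(\begin{array}{cc} M(\mathcal{A}) & O \\ O & O \end{array}\right)$. Now combine: $\mathcal{A}$ column competent $\Rightarrow$ (by Lemma~\ref{row diagonal lemma}) $M(\mathcal{A})$ column competent $\Rightarrow$ (by Theorem~\ref{condition for competent of block matrix }, since $B=O$) $\bar{A}$ is column competent matrix $\Rightarrow$ (by Theorem~\ref{matrix finite w result}, the Xu characterization) for all $\bar{q}\in\mathbb{R}^N$ the LCP$(\bar{q},\bar{A})$ has finitely many $w$-solutions $\Rightarrow$ (by Theorem~\ref{finiteness of tcp by lcp}) for all $q\in\mathbb{R}^n$ the TCP$(q,\mathcal{A})$ has finitely many $\omega$-solutions. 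So $(b)\Rightarrow(c)$ is essentially a concatenation of four results already in hand.

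The one direction needing genuine argument is $(c) \Rightarrow (a)$, which I expect to be the main obstacle. I would argue the contrapositive: suppose $M(\mathcal{A})$ is \emph{not} a column competent matrix. Then by Theorem~\ref{matrix finite w result} there is a vector $q \in \mathbb{R}^n$ for which LCP$(q, M(\mathcal{A}))$ has infinitely many $w$-solutions. The task is to transport this failure back up to the tensor. Using $\bar{A} = \left(\begin{array}{cc} M(\mathcal{A}) & O \\ O & O \end{array}\right)$ and Lemma~\ref{important lemma}, each $y \in$ SOL$(\bar q, \bar A)$ yields $x \in$ SOL$(q,\mathcal{A})$ with $x_i = y_i^{1/(m-1)}$; since $m$ is even, $(m-1)$ is odd and this $(m-1)$-st root is well-defined and a bijection $\mathbb{R} \to \mathbb{R}$, so distinct $y$'s give distinct $x$'s and, crucially, distinct $\omega$-values $\mathcal{A}x^{m-1} + q = M(\mathcal{A})y + q = w$. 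Because there are infinitely many distinct $w$-solutions $w = M(\mathcal{A})y + q$ of the matrix LCP (restricting to the first $n$ coordinates, which is where all the action is since $B=O$), there are infinitely many distinct $\omega$-solutions of TCP$(q,\mathcal{A})$, contradicting (c). Hence $M(\mathcal{A})$ must be column competent. The delicate points to get right are: (i) checking that infinitely many $w$-solutions of LCP$(q,M(\mathcal{A}))$ lift to infinitely many $w$-solutions of the padded LCP$(\bar q,\bar A)$ and that feasibility/complementarity is preserved (this uses $B = O$ and the block-zero structure), and (ii) confirming the root map preserves distinctness of the resulting $\omega$-vectors — both hinge on even order. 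With this, the three-way equivalence closes.
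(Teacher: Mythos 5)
Your proposal is correct, but it reaches (c) by a genuinely different and more roundabout route than the paper. For (a)$\iff$(b) you and the paper both invoke Lemma \ref{row diagonal lemma}. For the remaining implications the paper does not pass through the auxiliary matrix $\Bar{A}$ at all: it observes that row diagonality gives $\mathcal{A}x^{m-1}=M(\mathcal{A})x^{[m-1]}$, so that under the substitution $y=x^{[m-1]}$ (a bijection of $\mathbb{R}^n_+$ onto itself because $m-1$ is odd) the problem TCP$(q,\mathcal{A})$ \emph{is} the $n$-dimensional problem LCP$(q,M(\mathcal{A}))$ with identical $\omega$- and $w$-values; Theorem \ref{matrix finite w result} applied to $M(\mathcal{A})$ then gives (a)$\implies$(c) and (c)$\implies$(a) in one step each. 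Your version instead concatenates Lemma \ref{row diagonal lemma}, Theorem \ref{condition for competent of block matrix }, Theorem \ref{matrix finite w result} and Theorem \ref{finiteness of tcp by lcp} through the $N$-dimensional padded problem LCP$(\Bar{q},\Bar{A})$, and for (c)$\implies$(a) lifts solutions via Lemma \ref{important lemma}. This is logically sound --- your observation that row diagonality forces $B=O$ is correct and is what makes the block-matrix theorem applicable --- and it has the virtue of reusing the general auxiliary-LCP machinery and of being explicit about why distinct $w$'s yield distinct $\omega$'s; but it buys nothing extra here, since for row diagonal tensors the direct identification with LCP$(q,M(\mathcal{A}))$ already carries all the information and avoids the feasibility/complementarity bookkeeping in $\mathbb{R}^N$ that you flag as delicate. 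One small point worth making explicit in either approach (the paper also glosses over it) is that every $w$-solution of LCP$(q,M(\mathcal{A}))$ arises from the TCP and conversely, i.e.\ the correspondence of solution sets is onto in both directions; your bijection remark about the $(m-1)$-st root is exactly the right justification.
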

\begin{proof}
(a)$ \iff $(b): By Lemma \ref{row diagonal lemma}.

(a)$\implies$(c): Since $\mathcal{A}$ is row diagonal, we have $\mathcal{A} x^{m-1}= M(\mathcal{A}) x^{[m-1]}$. Hence TCP$(q, \mathcal{A})$ is equivalent to the following LCP$(q, M(\mathcal{A}))$
\begin{equation}\label{row diag equn for solution}
     y\geq 0,\; \; M(\mathcal{A})y +q \geq 0,\; \; y^T (M(\mathcal{A})y +q) =0,
\end{equation}
where $y=x^{[m-1]} = (x_1^{m-1},\; x_2^{m-1},\; ..., \; x_n^{m-1})^T.$ Assume solution of LCP$(q, M(\mathcal{A}))$ exists. Since $ M(\mathcal{A})$ is column competent matrix so for every vector $q,$ the LCP$(q, M(\mathcal{A}))$ has a finite number (possibly zero) of $w$-solutions. Hence for $q\in \mathbb{R}^n,$ the TCP$(q,\mathcal{A})$ have a finite number (possibly zero) of $\omega$-solutions.

(c)$\implies $(a):
Assume $\mathcal{A}$ be row diagonal tensor and $q\in \mathbb{R}^n,$ TCP$(q, \mathcal{A})$ has a finite number (possibly zero) of $\omega$-solution. Now for every vector $q,$ the LCP$(q, M(\mathcal{A}))$ has a finite number (possibly zero) of $w$-solutions. Then by Theorem \ref{matrix finite w result}, we conclude $M(\mathcal{A})$ is column competent matrix.
\end{proof}

\begin{corollary}
Let $\mathcal{A}\in T_{m,n}$ even order row diagonal column competent tensor. Then $q\in \mathbb{R}^n$, any $\omega$-solution of the TCP$(q, \mathcal{A}),$ if it exists, is locally unique.
\end{corollary}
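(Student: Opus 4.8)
The plan is to derive this corollary immediately from the preceding theorem together with the elementary topological fact that a finite subset of a Euclidean space consists of isolated points. Since $\mathcal{A}$ is an even order row diagonal column competent tensor, the implication (b)$\implies$(c) of the preceding theorem shows that, for every $q\in\mathbb{R}^n$, the problem TCP$(q,\mathcal{A})$ has a finite number (possibly zero) of $\omega$-solutions.

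Fix $q\in\mathbb{R}^n$ and suppose TCP$(q,\mathcal{A})$ has an $\omega$-solution $\tilde{\omega}$; let $S$ be the set of all $\omega$-solutions of TCP$(q,\mathcal{A})$, which is finite by the previous paragraph. If $S=\{\tilde{\omega}\}$ there is nothing to prove. Otherwise $S$ has at least two elements, and I would set $\delta$ to be the minimum of the Euclidean distances $\|\tilde{\omega}-\omega\|$ over all $\omega\in S$ with $\omega\neq\tilde{\omega}$; this $\delta$ is strictly positive, and the open ball of radius $\delta$ around $\tilde{\omega}$ meets $S$ only at $\tilde{\omega}$. Thus $\tilde{\omega}$ is the only $\omega$-solution in that neighborhood, i.e.\ it is locally $\omega$-unique.

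An equivalent, slightly more hands-on route goes through the majorization matrix: because $\mathcal{A}$ is row diagonal of even order, $\mathcal{A}x^{m-1}=M(\mathcal{A})x^{[m-1]}$ and the map $x\mapsto x^{[m-1]}$ is a bijection of $\mathbb{R}^n_+$ onto itself (as $m-1$ is odd), so the $\omega$-solutions of TCP$(q,\mathcal{A})$ coincide with the $w$-solutions of LCP$(q,M(\mathcal{A}))$; by Lemma \ref{row diagonal lemma} the matrix $M(\mathcal{A})$ is column competent, whence Theorem \ref{matrix finite w result}(c) gives that every such $w$-solution is locally $w$-unique, which is exactly the claim. I do not expect a real obstacle here: the substantive ingredient, finiteness of the $\omega$-solution set, is already supplied by the preceding theorem, and in the second route the only point needing a moment's care is that nonnegativity and the complementarity equation are preserved under $x\leftrightarrow x^{[m-1]}$, which works precisely because $m$ is even.
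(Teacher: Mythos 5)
Your proposal is correct and follows essentially the same route as the paper: both derive finiteness of the $\omega$-solution set from the preceding equivalence theorem for even order row diagonal column competent tensors, and then conclude local uniqueness because points of a finite set are isolated (you merely spell out the $\delta$-ball argument that the paper leaves implicit). The alternative route through $M(\mathcal{A})$ and Theorem \ref{matrix finite w result} is also sound but adds nothing essential here.
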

\begin{proof}
Since $\mathcal{A}$ is an even order row diagonal column competent tensor, for $q \in \mathbb{R}^n,$ the TCP$(q,\mathcal{A})$ has a finite number (possibly zero) of $\omega$-solutions. This implies locally uniqueness of $\omega$-solutions.
\end{proof}

\begin{theorem}\label{column competent tensor implies }
Let $\mathcal{A} \in T_{m,n}$, where $m$ is even and the auxiliary matrix $\Bar{A}$ is of the form $\Bar{A} =$ $ \left( \begin{array}{cc}
    M(\mathcal{A}) & O \\
    O & O
\end{array} \right).$ Then the following are equivalent:\\
(a) $M(\mathcal{A})$ is column competent matrix\\
(b) $\Bar{A}$ is column competent matrix\\
(c) $\mathcal{A}$ is column competent tensor.
\end{theorem}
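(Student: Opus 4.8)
The plan is to prove the cycle $(a)\Rightarrow(b)\Rightarrow(c)\Rightarrow(a)$; two of the three arrows are already in hand, so the work concentrates on the third. Since $B=O$ here, Theorem~\ref{condition for competent of block matrix } specializes to the statement ``$\Bar{A}$ is column competent matrix if and only if $M(\mathcal{A})$ is column competent matrix'', which yields $(a)\Leftrightarrow(b)$ at once; and $(b)\Rightarrow(c)$ is exactly Theorem~\ref{competent matrix implies competent tensor}. Thus it remains only to establish $(c)\Rightarrow(a)$.

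For that, the first step is to record what the assumed form $\Bar{A}=\left(\begin{array}{cc} M(\mathcal{A}) & O\\ O & O\end{array}\right)$ says about the polynomial map of $\mathcal{A}$. Writing $A=(M(\mathcal{A})\; O)$ and recalling that $\mathcal{A}x^{m-1}=Ay$, where the first $n$ coordinates of $y$ realize the monomials $x_1^{m-1},\dots,x_n^{m-1}$, the vanishing of the remaining columns of $A$ forces the identity $\mathcal{A}x^{m-1}=M(\mathcal{A})\,x^{[m-1]}$ for every $x\in\mathbb{R}^n$, with $x^{[m-1]}=(x_1^{m-1},\dots,x_n^{m-1})^T$. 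So although $\mathcal{A}$ need not itself be row diagonal, its associated map coincides with that of the row diagonal tensor $M(\mathcal{A})\mathcal{I}_m$, and the argument of Lemma~\ref{row diagonal lemma} can be reused.

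Concretely, let $z\in\mathbb{R}^n$ satisfy $z_i(M(\mathcal{A})z)_i=0$ for all $i\in[n]$; I want $M(\mathcal{A})z=0$. Since $m$ is even, $m-1$ is odd, so $t\mapsto t^{1/(m-1)}$ is a sign-preserving bijection of $\mathbb{R}$; put $x=z^{[\frac{1}{m-1}]}$, so that $x^{[m-1]}=z$ and hence $\mathcal{A}x^{m-1}=M(\mathcal{A})z$. Because $x_i$ and $x_i^{m-1}=z_i$ vanish together and are otherwise of the same sign, the hypothesis $z_i(M(\mathcal{A})z)_i=x_i^{m-1}(M(\mathcal{A})z)_i=0$ is equivalent to $x_i(\mathcal{A}x^{m-1})_i=0$ for each $i\in[n]$. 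Column competence of $\mathcal{A}$ then gives $\mathcal{A}x^{m-1}=0$, i.e. $M(\mathcal{A})z=0$, so $M(\mathcal{A})$ is column competent and the cycle closes.

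I expect the only delicate point to be the bookkeeping in the first step of $(c)\Rightarrow(a)$: one must check carefully that the special form of $\Bar{A}$ really does collapse $\mathcal{A}x^{m-1}$ to $M(\mathcal{A})x^{[m-1]}$ (so that row-diagonality of $\mathcal{A}$ is not actually needed, only the identity on the polynomial map), together with the elementary but essential sign/zero correspondence between $x_i$ and $z_i=x_i^{m-1}$ that is available precisely because $m$ is even. Everything else is a direct appeal to Theorems~\ref{competent matrix implies competent tensor} and~\ref{condition for competent of block matrix } and the technique of Lemma~\ref{row diagonal lemma}.
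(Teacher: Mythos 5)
Your proposal is correct and follows essentially the same route as the paper: both use Theorem~\ref{condition for competent of block matrix } for $(a)\Leftrightarrow(b)$, Theorem~\ref{competent matrix implies competent tensor} for $(b)\Rightarrow(c)$, and close the cycle with the same computation based on the identity $\mathcal{A}x^{m-1}=M(\mathcal{A})x^{[m-1]}$ and the odd-root substitution $x=z^{[\frac{1}{m-1}]}$ available because $m$ is even. The only (cosmetic) difference is that the paper closes the loop by proving $(c)\Rightarrow(b)$ directly, carrying along the trivial coordinates $i\in[N]\setminus[n]$ of $\Bar{A}y$, whereas you prove $(c)\Rightarrow(a)$ and let the block-matrix theorem absorb that bookkeeping.
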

\begin{proof}
(a)$\iff $(b): By Theorem \ref{condition for competent of block matrix }.

(b)$\implies $(c): By Theorem \ref{competent matrix implies competent tensor}.

(c)$\implies$(b): If $\mathcal{A}$ be such that the auxiliary matrix $\Bar{A}$ is of the form $\Bar{A} =$ $ \left( \begin{array}{cc}
    M(\mathcal{A}) & O \\
    O & O
\end{array} \right)$. Then for $i\in[n]$ and $x\in \mathbb{R}^n$ we can construct $y$ for which
\begin{equation}\label{equation for main result}
    (\mathcal{A}x^{m-1})_i = (\Bar{A}y)_i = (M(\mathcal{A})\mathcal{I}_m x^{m-1})_i.
\end{equation}
Let $\mathcal{A}$ be a column competent tensor. Then $\forall \; i \in [n]$, $x^{m-1}_i(\mathcal{A}x^{m-1})_i = 0 \iff  x_i(\mathcal{A}x^{m-1})_i = 0  \implies \mathcal{A}x^{m-1}=0$.
For the auxiliary matrix $\Bar{A}$, if  $y\in \mathbb{R}^N$ and $y_i(\Bar{A}y)_i=0,\; \forall \; i\in [N]$ then we define $z\in \mathbb{R}^n$ such that $z_i=y_i, \; \forall \; i \in [n]$. Since $m$ is even, for $z\in \mathbb{R}^n$ $\exists \;x \in \mathbb{R}^n$ such that $x=z^{[\frac{1}{m-1}]}.$ Then we have 
\begin{align*}
   y_i(\Bar{A}y)_i & = \left\{
\begin{array}{ll}
	  z_i(M(\mathcal{A})z)_i \; ; & \forall \; i\in [n] \\
	  0 \; ;  & \forall \; i\in [N]\backslash [n]
	   \end{array}
 \right. \\
    & = \left\{
\begin{array}{ll}
	  x^{m-1}_i(\mathcal{A}x^{m-1})_i \; ; & \forall \; i\in [n] \\
	  0 \; ;  & \forall \; i\in [N]\backslash [n]
	   \end{array}
 \right..
\end{align*}
Then $y_i(\Bar{A}y)_i = 0,\; \forall \; i \in [N] \implies x^{m-1}_i(\mathcal{A}x^{m-1})_i = 0, \; \forall \; i \in [n]$. This implies $\mathcal{A}x^{m-1}=0 $, since $\mathcal{A}$ is column competent tensor. This implies $(\Bar{A}y)_i =0, \; \forall \; i\in [n]$. By the construction of $\Bar{A}$, for all $y \in \mathbb{R}^N$ we have $(\Bar{A}y)_i =0, \; \forall \; i\in [N]\backslash [n] $. Thus $y_i(\Bar{A}y)_i = 0,\; \forall \; i \in [N] \implies \Bar{A}y = 0$. Hence $\Bar{A}$ is column competent matrix.
\end{proof}

\begin{theorem}\label{competent implies finiteness of w}
Let $\mathcal{A} \in T_{m,n}$, where $m$ is even and the auxiliary matrix $\Bar{A}$ is of the form $\Bar{A} =$ $ \left( \begin{array}{cc}
    M(\mathcal{A}) & O \\
    O & O
\end{array} \right).$ Then the following are equivalent:\\
(a) $\Bar{A}$ is column competent matrix\\
(b) For all vector $\Bar{q}$, the LCP$(\Bar{q},\Bar{A})$ has finite number (possibly zero) of $w$-solution\\
(c) For all vector $q,$ the TCP$(q,\mathcal{A})$ has finite number (possibly zero) of $\omega$-solution.
\end{theorem}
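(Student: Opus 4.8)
The plan is to prove (a)$\iff$(b) directly from Theorem~\ref{matrix finite w result} applied to the matrix $\Bar{A}$, and then establish (b)$\iff$(c) by transporting solutions back and forth between the auxiliary LCP$(\Bar{q},\Bar{A})$ and the original TCP$(q,\mathcal{A})$, invoking Theorem~\ref{finiteness of tcp by lcp} for one direction and a near-identical converse argument for the other. The structural hypothesis $\Bar{A} = \left(\begin{smallmatrix} M(\mathcal{A}) & O \\ O & O \end{smallmatrix}\right)$ is exactly the hypothesis of Lemma~\ref{important lemma}, so I can use that lemma freely.

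First I would prove (a)$\iff$(b): this is immediate from Theorem~\ref{matrix finite w result} (the equivalence (a)$\iff$(b) there), applied verbatim to the $N\times N$ matrix $\Bar{A}$ in place of $A$, since that theorem holds for an arbitrary square real matrix and $\Bar{q}$ ranges over all of $\mathbb{R}^N$. Next, (b)$\implies$(c) is precisely Theorem~\ref{finiteness of tcp by lcp}, which already gives: if LCP$(\Bar{q},\Bar{A})$ has finitely many $w$-solutions for all $\Bar{q}\in\mathbb{R}^N$, then TCP$(q,\mathcal{A})$ has finitely many $\omega$-solutions for all $q\in\mathbb{R}^n$; so nothing new is needed there.

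The remaining implication, (c)$\implies$(b), is where the real work lies, and I expect it to be the main obstacle. I would argue by contrapositive: suppose for some $\Bar{q}\in\mathbb{R}^N$ the LCP$(\Bar{q},\Bar{A})$ has infinitely many distinct $w$-solutions $w^1,w^2,\dots$. Because of the block-zero structure of $\Bar{A}$ and $\Bar{q}$, each such $w^k$ has the form $w^k=\left(\begin{smallmatrix} u^k \\ 0\end{smallmatrix}\right)$ with $u^k = M(\mathcal{A})z^k + q$ for a corresponding $w$-solution/$z$-solution pair, where we may take $\Bar{q}=\left(\begin{smallmatrix} q \\ 0\end{smallmatrix}\right)$ for the associated $q\in\mathbb{R}^n$ (the components of $\Bar{q}$ in $[N]\setminus[n]$ are irrelevant to solvability since the last $N-n$ rows of $\Bar{A}$ are zero and $w$ there is forced to equal $\Bar{q}$, which we simply set to $0$). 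For each solution $y^k\in\mathrm{SOL}(\Bar{q},\Bar{A})$, Lemma~\ref{important lemma} produces $x^k\in\mathbb{R}^n$ with $x^k_i=(y^k_i)^{1/(m-1)}$ and $x^k\in\mathrm{SOL}(q,\mathcal{A})$; moreover $\mathcal{A}(x^k)^{m-1} = M(\mathcal{A})(x^k)^{[m-1]} = M(\mathcal{A})(y^k\!\restriction_{[n]})$, so $\omega^k := \mathcal{A}(x^k)^{m-1}+q = u^k$, i.e.\ the $\omega$-solutions of the TCP reproduce the first $n$ coordinates of the $w$-solutions of the LCP. Since the $w^k$ are distinct and their tails are all zero, the $u^k$ are distinct, hence the $\omega^k$ are distinct, giving infinitely many $\omega$-solutions of TCP$(q,\mathcal{A})$ and contradicting (c). The delicate points to get right are: (i) that $m$ even guarantees the real $(m-1)$-th root in Lemma~\ref{important lemma} is well-defined and single-valued for every real $y^k_i\ge 0$ (this is where evenness of $m$ is used, via $m-1$ odd—so in fact the root exists for all reals, and nonnegativity of $y^k_i$ is automatic); and (ii) bookkeeping the correspondence $\Bar{q}\leftrightarrow q$ and the block decomposition carefully so that "finitely many $\omega$-solutions for all $q$" is genuinely contradicted. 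With these in place the chain (a)$\iff$(b)$\iff$(c) is complete.
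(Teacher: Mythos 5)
Your proposal is correct and follows essentially the same route as the paper: (a)$\Rightarrow$(b) via Theorem~\ref{matrix finite w result}, (b)$\Rightarrow$(c) via Theorem~\ref{finiteness of tcp by lcp}, and (c)$\Rightarrow$(b) by the same contradiction argument that lifts infinitely many distinct $w$-solutions of LCP$(\Bar{q},\Bar{A})$ to infinitely many distinct $\omega$-solutions of TCP$(q,\mathcal{A})$ through Lemma~\ref{important lemma} and the identity $\mathcal{A}(x^k)^{m-1}=M(\mathcal{A})(x^k)^{[m-1]}$. The only divergence is minor: for (b)$\Rightarrow$(a) you apply Theorem~\ref{matrix finite w result} directly to the $N\times N$ matrix $\Bar{A}$, whereas the paper detours through $M(\mathcal{A})$ and Theorem~\ref{condition for competent of block matrix }; your version is slightly more direct and equally valid.
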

\begin{proof}
(a) $\implies $(b): By Theorem \ref{matrix finite w result}.

(b)$\implies$(c): By Theorem \ref{finiteness of tcp by lcp}.

(c)$\implies$(b): 
Let for every vector $q$, the TCP$(q,\mathcal{A})$ has finite number (possibly zero) of $\omega$-solutions. Let there be a vector $q$ for which LCP$(\Bar{q},\Bar{A})$ has infinitely many $w$-solutions. Then there exists a sequence of vectors $\{ w^k \}$ in $\mathbb{R}^N$ such that $w^r \neq w^s , \; \forall \; r \neq s \mbox{ where } r,s\in \mathbb{N}$ and each $w^k$ is a $w$-solution of LCP$(\Bar{q}, \Bar{A})$. This implies $\exists$ a sequence of vectors $\{y^k\}$ such that  $y^k \in $ SOL$(\Bar{q},\Bar{A}) $ and $w^k = \Bar{A}y^k + \Bar{q}, \; \forall \; k \in \mathbb{N} $. For each $y^k \in$ SOL$(\Bar{q},\Bar{A})$ by Lemma \ref{important lemma} there exists $x^k \in$ SOL$(q,\mathcal{A}) $, where $x^k_i = (y^k)_i^{\frac{1}{m-1}}$, for $i= 1, 2, ..., n$ and
 
\begin{align*}
\omega^k_i & = (\mathcal{A}x^{k^{m-1}} + q)_i \\
           & = (M(\mathcal{A})\mathcal{I}_n x^{k^{m-1}} + q)_i\\
           & = (M(\mathcal{A})x^{k^{[m-1]}} + q)_i\\
           & = (\Bar{A}y^k + \Bar{q})_i \\
           & = w_i^k.
\end{align*}
Since $w^k_i = 0$, for all $i \in [N]\backslash [n]$ and $w^r \neq w^s , \; \forall \; r \neq s \mbox{ where } r,s\in \mathbb{N}$ then $\omega^r \neq \omega^s$, $\forall \; r \neq s, \; r,s \in \mathbb{N}$. This implies TCP$(q,\mathcal{A})$ has infinitely many $\omega$-solution which is a contradiction. Hence for $\Bar{q}\in \mathbb{R}^N$, the LCP$(\Bar{q},\Bar{A})$ has finite number (possibly zero) of $w$-solution.

(b)$\implies$(a): Let for all vector $\Bar{q}$, the LCP$(\Bar{q},\Bar{A})$ has finite number (possibly zero) of $w$-solutions. Then for $q\in \mathbb{R}^n$, the LCP$(q,M(\mathcal{A}))$ has finite number (possibly zero) of $w$-solutions. Then by Theorem \ref{matrix finite w result}, we conclude $M(\mathcal{A})$ is column competent matrix. By Theorem \ref{condition for competent of block matrix }, $\Bar{A}$ is column competent matrix .

\end{proof}

\begin{corol}
Let $\mathcal{A} \in T_{m,n}$ be an even order column competent tensor such that the auxiliary matrix $\Bar{A}$ is of the form $\Bar{A} =$ $ \left( \begin{array}{cc}
    M(\mathcal{A}) & O \\
    O & O
\end{array} \right).$ Then for all vector $q$, any $\omega$-solution of the TCP$(q,\mathcal{A})$, if it exists, is locally $\omega$-unique.
\end{corol}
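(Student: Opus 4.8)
The plan is to read the conclusion off the two block-matrix results already proved and then to invoke the elementary fact that a finite subset of $\mathbb{R}^n$ is discrete. First I would observe that the hypotheses of the corollary are exactly condition~(c) of Theorem~\ref{column competent tensor implies }: namely $m$ even, the auxiliary matrix $\Bar{A}$ of the stated form $\left(\begin{array}{cc} M(\mathcal{A}) & O \\ O & O \end{array}\right)$, and $\mathcal{A}$ a column competent tensor. That theorem therefore gives immediately that $\Bar{A}$ is a column competent matrix (condition~(b) there), and in fact that $M(\mathcal{A})$ is column competent as well.

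Next, since $m$ is even and $\Bar{A}$ has the required block form, Theorem~\ref{competent implies finiteness of w} applies, and its implication (a)$\,\Rightarrow\,$(c) yields that for every vector $q \in \mathbb{R}^n$ the TCP$(q,\mathcal{A})$ has a finite number (possibly zero) of $\omega$-solutions. One could equally route this through the auxiliary LCP$(\Bar{q},\Bar{A})$, using Theorem~\ref{matrix finite w result} to obtain finitely many $w$-solutions and then Theorem~\ref{finiteness of tcp by lcp} to transfer finiteness back to the TCP, but appealing to the cited equivalence is the shortest path.

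Finally, I would convert finiteness into local $\omega$-uniqueness. Fix $q$ and let $\bar{\omega}$ be an $\omega$-solution of TCP$(q,\mathcal{A})$ (if none exists the statement is vacuous). The set $S$ of all $\omega$-solutions is finite, so $\rho := \min\{ \|\bar{\omega}-\omega'\| : \omega'\in S,\ \omega'\neq\bar{\omega} \}$ is strictly positive, with the convention that $\rho$ may be chosen to be any positive number when $S=\{\bar{\omega}\}$. Then the open ball of radius $\rho$ about $\bar{\omega}$ is a neighborhood of $\bar{\omega}$ containing no other $\omega$-solution, so $\bar{\omega}$ is locally $\omega$-unique; this is precisely the mechanism already used for the earlier even-order row-diagonal corollary. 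I do not anticipate a genuine obstacle here, since each step simply invokes a result established above; the only non-mechanical point is this last one — that a finite solution set has positive internal separation and hence each of its points admits an isolating neighborhood — which is routine point-set topology in $\mathbb{R}^n$.
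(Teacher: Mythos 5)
Your proposal is correct and follows essentially the same route as the paper: apply Theorem~\ref{column competent tensor implies } to conclude $\Bar{A}$ is a column competent matrix, then Theorem~\ref{competent implies finiteness of w} to get finitely many $\omega$-solutions, and finally deduce local $\omega$-uniqueness from finiteness. The only difference is that you spell out the last step (a finite set has positive internal separation, hence each point admits an isolating neighborhood), which the paper dispatches in a single sentence.
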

\begin{proof}
Let $\mathcal{A}$ be an even order column competent tensor such that the auxiliary matrix $\Bar{A}$ is of the form $\Bar{A} =$ $ \left( \begin{array}{cc}
    M(\mathcal{A}) & O \\
    O & O
\end{array} \right).$ Then $\Bar{A}$ is column competent matrix by Theorem \ref{column competent tensor implies }. By Theorem \ref{competent implies finiteness of w}, we conclude for every vector $q,$ the TCP$(q, \mathcal{A})$ has a finite number (possibly zero) of $\omega$-solutions. The finite collection of $\omega$-solutions implies local uniqueness.
\end{proof}

\section{Conclusion}
Here we define column competent tensor and study tensor theoretic properties. We establish a connection between nondegenerate tensor and column competent tensor. We establish some tensor complementarity problem related results. For  $\mathcal{A}\in T_{m,n}$, we show that TCP$(q,\mathcal{A})$ has locally $\omega$-unique solutions under some assumptions. We illustrate the results by various examples. 

\section{Acknowledgment}
The author A. Dutta is thankful to the Department of Science and technology, Govt. of India, INSPIRE Fellowship Scheme for financial support.
The author R. Deb is thankful to the Council of Scientific $\&$ Industrial Research (CSIR), India, Junior Research Fellowship scheme for financial support.

\bibliographystyle{plain}
\bibliography{referencesA}

\end{document}